\UseRawInputEncoding\documentclass[12pt, reqno]{amsart}
\usepackage[margin=1in]{geometry}
\usepackage{amssymb,latexsym,amsmath,amscd,amsfonts}
\usepackage{latexsym}
\usepackage[mathscr]{eucal}
\usepackage{bm}
\usepackage{mathptmx}
\usepackage{amssymb}
\usepackage{amsthm}
\usepackage{dcolumn}
\usepackage[all]{xy}
\usepackage{enumitem}
\usepackage[utf8]{inputenc}

\def\textmatrix#1&#2\\#3&#4\\{\bigl({#1 \atop #3}\ {#2 \atop #4}\bigr)}
\def\dispmatrix#1&#2\\#3&#4\\{\left({#1 \atop #3}\ {#2 \atop #4}\right)}
\newcommand{\beg}{\begin{equation}}
	\newcommand{\eeg}{\end{equation}}
\newcommand{\ben}{\begin{eqnarray*}}
	\newcommand{\een}{\end{eqnarray*}}

\newcommand{\C}{\mathbb C}

\newcommand{\N}{\mathbb N}
\newcommand{\T}{\mathbb T}

\newcommand{\E}{\mathbb E}

\newcommand{\Pe}{\mathbb P}
\newcommand{\D}{\mathbb D}
\newcommand{\G}{\mathbb G}

\newcommand{\Pbar}{\overline{\mathbb P}}

\newcommand{\lm}{\lambda}

\newcommand{\HS}{\mathcal{H}}

\newcommand{\al}{\alpha}
\newcommand{\DC}{\overline{\mathbb{D}}}

\newcommand{\la}{\langle}
\newcommand{\ra}{\rangle}
\newcommand{\LS}{\mathscr{L}}
\newcommand{\KS}{\mathcal{K}}
\newtheorem{thm}{Theorem}[section]

\newtheorem{lem}[thm]{Lemma}

\newtheorem{prop}[thm]{Proposition}
\numberwithin{equation}{section} \theoremstyle{definition}
\newtheorem{defn}[thm]{Definition}

\def\textmatrix#1&#2\\#3&#4\\{\bigl({#1 \atop #3}\ {#2 \atop #4}\bigr)}
\def\dispmatrix#1&#2\\#3&#4\\{\left({#1 \atop #3}\ {#2 \atop #4}\right)}

\begin{document}
	
	\title[A Toeplitz Corona theorem for the pentablock]{A Toeplitz corona theorem for the pentablock and applications} 
	\author{SOURAV PAL AND NITIN TOMAR}
	
	\address[Sourav Pal]{Mathematics Department, Indian Institute of Technology Bombay,
		Powai, Mumbai - 400076, India.} \email{sourav@math.iitb.ac.in}
	
	\address[Nitin Tomar]{Mathematics Department, Indian Institute of Technology Bombay, Powai, Mumbai-400076, India.} \email{tomarnitin414@gmail.com}

	\keywords{Pentablock, symmetrized bidisc, bidisc, Toeplitz corona theorem}	
	
	\subjclass[2020]{30H80, 32A70, 47A13}

	\begin{abstract}
We state and prove a Toeplitz corona theorem for the pentablock $\mathbb{P}$, a domain in $\mathbb{C}^3$ given by
\[
\mathbb{P}=\{(a_{21}, \text{tr}(A), \det(A)) \in \mathbb C^3 : A=[a_{ij}] \in M_2(\mathbb C), \|A\|<1\}.
\]
By two different applications of this theorem, we obtain a few new characterizations in the Toeplitz corona theorems for the bidisc and the symmetrized bidisc.
	\end{abstract}	
	
	\maketitle

	\section{Introduction}
	
	\noindent Throughout the paper, all operators are bounded linear maps acting on complex separable Hilbert spaces. For Hilbert spaces $\HS$ and $\mathcal{L}$, the space of all operators from $\HS$ to $\mathcal{L}$ is denoted by $\mathcal{B}(\HS, \mathcal{L})$ with $\mathcal{B}(\HS)=\mathcal{B}(\HS, \HS)$. Let $\C, \D$ and $\T$ be the complex plane, the unit disc and the unit circle in the complex plane, respectively, with center at the origin. For a commuting tuple of operators $\underline{T}=(T_1, \dotsc, T_d)$, we denote by $\sigma_T(\underline{T})$ its Taylor joint spectrum. The space of all holomorphic functions on a domain $\Omega \subseteq \C^d$ is denoted by $\text{Hol}(\Omega)$. 
	
	\smallskip 
	
	The corona problem originated in the study of maximal ideal spaces of Banach algebras of holomorphic functions. The classical setting is the Banach algebra $H^\infty(\D)$ of all bounded holomorphic functions on the unit disc $\D$, equipped with the supremum norm $\|.\|_{\infty, \D}$. The maximal ideal space $M_{H^\infty(\D)}$ is the set of all nonzero linear functionals on $H^\infty (\D)$, which is contained in the unit ball of the dual space of $H^\infty(\D)$. In fact, $M_{H^\infty(\D)}$ endowed with weak $*$-topology is a compact Hausdorff space. For each $w \in \D$, the point evaluation functional $e_w: f\mapsto f(w)$ for $f \in H^\infty(\D)$ defines an element of $M_{H^\infty(\D)}$ and thus, $\D$ becomes a subset of $M_{H^\infty(\D)}$. The \textit{corona} is a set defined as the complement of the weak $*$-closure of $\D$ in $M_{H^\infty(\D)}$. The corona problem on $\D$ asks whether this set is empty. Some routine arguments from Banach algebra theory show that $H^\infty (\D)$ has no corona if and only if the following holds: 
	  given $\varphi_1, \dotsc, \varphi_n \in H^\infty(\D)$ satisfying
	 \begin{align}\label{eqn_101}
	 \sup_{z \in\D}\left[|\varphi_1(z)|^2+\dotsc+|\varphi_n(z)|^2\right] \geq \epsilon^2
	 \end{align}
for some $\epsilon>0$, there exist functions $f_1, \dotsc, f_n$ in $H^\infty(\D)$ such that $\varphi_1f_1+\dotsc+\varphi_n f_n=1$. Carleson \cite{Carleson} proved that the aforementioned equivalent criterion holds and so, $H^\infty(\D)$ has no corona. Arveson \cite{Arveson_TC} proved an operator-theoretic analogue of this result, which is commonly referred to as the \textit{Toeplitz corona theorem} on $\D$, where the condition \eqref{eqn_101} is replaced by the condition
	 $\displaystyle 
	 T_{\varphi_1}T_{\varphi_1}^*+\dotsc+T_{\varphi_n}T_{\varphi_n}^* \geq \epsilon^2I.
	 $
	Evidently, $T_{\varphi_i}$ is the Toeplitz operator with symbol $\varphi_i$ for $1 \leq i \leq n$. Subsequently, Nagy and Foias \cite{SNF76} obtained a more general operator-valued version from which Toeplitz corona theorem follows as a special case. Later, Schubert \cite{Sch78} gave a shorter proof to the Toeplitz corona theorem together with explicit bounds on the norms of $f_1, \dotsc, f_n$ in $H^\infty(\D)$ such that $\varphi_1f_1+\dotsc+\varphi_nf_n=1$.  An interested reader can see \cite{Coburn, Gamelin, Hormander, Rosenblum} and the references therein to witness various generalizations of the Toeplitz corona theorem on $\D$. Almost three decades ago, Agler and McCarthy \cite{Agler_McCarthyI} established a Toeplitz corona theorem on the bidisc $\D^2$. Amar \cite{Amar} proved the same theorem on the Euclidean unit ball $\mathbb{B}_n=\{z \in \C^n : \|z\|<1\}$ and the polydisc $\D^n\subset \C^n$ in terms of measures supported on the distinguished boundaries $\partial \mathbb{B}_n=\{z \in \C^n : \|z\|=1\}$ and $n$-torus $\T^n$, respectively. The study of Toeplitz corona problems on domains such as symmetrized bidisc and tetrablock (defined below) have also attracted considerable attentions in recent past, e.g., see \cite{Tirtha_Sau, Jain}. 
	 
	\smallskip 

	In this article, we study the Toeplitz corona theorem on three domains the bidisc $\D^2$, the symmetrized bidisc $\mathbb G_2$ and the pentablock $\Pe$ (defined below). The domains $\mathbb G_2, \Pe$ arise naturally in the context of $\mu$-synthesis problem that is originated in the control theory, e.g., see \cite{Doyle}. In control theory, the structured singular value $\mu_E$ for a linear subspace $E \subseteq M_d(\C)$ is defined as
	\[
	\mu_E(A)=1\slash \inf\{\|X\|: X \in E , \ \det(I-AX)=0\} \quad (A \in M_d(\C)).
	\]
	Given distinct points $\alpha_1, \dots, \alpha_n$ in $\D$ and matrices $B_1, \dots, B_n \in M_d(\C)$, the $\mu$-synthesis problem seeks the existence of an analytic map $F : \D \to M_d(\C)$ satisfying $F(\alpha_i)=B_i$, $1\leq i \leq n$ together with the condition that $\mu_E(F(\lambda)) \leq 1$ for all $\lambda \in \D$. How different cases of the $\mu$-synthesis problem induce different domains such as symmetrized bidisc, tetrablock, pentablock are explicitly shown in \cite{AglerYoung}, \cite{Abouhajar} and \cite{AglerIV}, respectively. These three domains are defined as follows:
		\begin{align*}
		\qquad	\G_2&=\{(\lm_1+\lm_2, \lm_1\lm_2) \in \C^2: |\lm_1|, |\lm_2|<1 \},\\
		\qquad 	\E&=\left\{(z^{(1)}, z^{(2)}, z^{(3)}) \in \C^3 : 1-z^{(1)}\al_1-z^{(2)}\al_2+z^{(3)}\al_1\al_2 \neq 0 \ \text{for all} \ \al_1, \al_2 \in \overline{\D}\right\}, \\
		\qquad 	\Pe&=\{(a_{21}, \text{tr}(A), \det(A)) \in \mathbb C^3 : A=[a_{ij}] \in M_2(\mathbb C), \|A\|<1\}.
	\end{align*}
Several aspects of function theory on these domains have been studied extensively. For example, Toeplitz corona theorem on $\G_2$ was proved in \cite{Tirtha_Sau}, while the corresponding theorem on $\E$ was obtained recently in \cite{Jain}. In this direction, we prove Toeplitz corona theorem on $\Pe$. The present work builds upon \cite{Pal2026}, where realization, interpolation and extension theorems were established on $\Pe$. To do so, the authors introduced in \cite{Pal2026} the class $\mathfrak{M}_\Pe$ consisting of all commuting operator triples $\underline{T}=(T_1, T_2, T_3)$ satisfying
$\|T_2\|<2, \|(2\al T_3-T_2)(2-\al T_2)^{-1}\|<1$ and $\|(1-|\al|^2)T_1(I-\al T_2-\al^2T_3)^{-1}\|<1$ for all $\al \in \DC$. Also, $\sigma_T(\underline{T}) \subseteq \Pe$ for all $\underline{T} \in \mathfrak{M}_\Pe$ yielding a functional calculus $f(\underline{T})$ for $f \in \text{Hol}(\Pe)$. The Schur-Agler class $SA(\Pe)$ of $\Pe$ is defined as
\[
SA(\Pe)=\left\{f \in \text{Hol}(\Pe) : \|f(\underline{T})\| \leq 1 \ \text{for all} \ \underline{T} \in \mathfrak{M}_\Pe\right\}.
\]
The realization theorem on $\Pe$ obtained in \cite{Pal2026} provides characterizations of functions in $SA(\Pe)$. The proof of Toeplitz corona theorem on $\Pe$ capitalizes a vector-valued analog of realization theorem for $\Pe$. For this reason, we present in Section \ref{sec_02} the vector-valued analogs of all necessary results from \cite{Pal2026}. Note that proofs to these vector-valued results can be achieved easily if one follows the same line of arguments as in the scalar-valued cases, \cite{Pal2026}. Thus, mostly we shall refrain from writing (trivial) proofs in Section \ref{sec_02}. In Section \ref{sec_03}, we state and prove the Toeplitz corona theorem on $\Pe$. An important geometric property of $\Pe$ is that both $\G_2$ and $\D^2$ admit holomorphic embeddings into $\Pe$, which follows from the result stated below.

\begin{thm}[\cite{Pal_penta}, Section 3]\label{thm_connect_P}
	A pair $(z^{(1)}, z^{(2)}) \in \D^2$ if and only if $(z^{(1)}, 0, z^{(2)}) \in \Pe$. Also, $(z^{(1)}, z^{(2)}) \in \G_2$ if and only if $(0, z^{(1)}, z^{(2)}) \in \Pe$.
\end{thm}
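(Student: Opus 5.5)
Both assertions come straight from the definition of $\Pe$ as the image of the open matrix unit ball under $A\mapsto(a_{21},\operatorname{tr}A,\det A)$. In each direction I will either exhibit a contraction $A$ with $\|A\|<1$ hitting the required point, or read off the constraint from an arbitrary such $A$.

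\emph{The bidisc.} Suppose $(z^{(1)},z^{(2)})\in\D^2$. I will choose $A$ traceless with $a_{21}=z^{(1)}$ and $\det A=z^{(2)}$. A natural first attempt is $A=\begin{pmatrix} a & b\\ z^{(1)} & -a\end{pmatrix}$ with $-a^2-bz^{(1)}=z^{(2)}$. The simplest choice is $a=0$, $b=-z^{(2)}/z^{(1)}$, which gives $\|A\|=\max(|z^{(1)}|,|z^{(2)}|/|z^{(1)}|)$. This fails when $|z^{(2)}|\ge |z^{(1)}|$, so $a$ has to be used. For the general case I write $A=\begin{pmatrix} a & b\\ c & -a\end{pmatrix}$. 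A direct computation gives
\[
A^*A+AA^*\text{-type identities: } \|A\|^2=\tfrac12\bigl(\|A\|_F^2+\sqrt{\|A\|_F^4-4|\det A|^2}\bigr),\qquad \|A\|_F^2=2|a|^2+|b|^2+|c|^2 .
\]
So $\|A\|<1$ is equivalent to $\|A\|_F^2<1+|\det A|^2$. With $c=z^{(1)}$ and $\det A=z^{(2)}$ fixed, the quantity to minimize is $2|a|^2+|b|^2$ subject to $a^2+bz^{(1)}=-z^{(2)}$. I expect the minimum to be $2|z^{(2)}|$ when $|z^{(1)}|^2\le 2|z^{(2)}|$, attained at $b=0$ up to scaling. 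If so, the condition becomes $|z^{(1)}|^2+2|z^{(2)}|<1+|z^{(2)}|^2$, i.e. $|z^{(1)}|^2<(1-|z^{(2)}|)^2$. This is not simply $|z^{(1)}|<1$, so the minimization must be done carefully, balancing the cost $|b|^2=|z^{(2)}+a^2|^2/|z^{(1)}|^2$ against $2|a|^2$. Working that optimization out, and checking that its minimum is below $1+|z^{(2)}|^2-|z^{(1)}|^2$ exactly when both moduli are less than $1$, is the main technical step. Alternatively I would cite the paper's own characterization of $\Pe$ from \cite{Pal_penta}, $(s_1,s_2,s_3)\in\Pe$ iff $(s_2,s_3)\in\G_2$ and
\[
|s_1|<\bigl|1-\tfrac{s_2\bar\beta}{2}\bigr|+\sqrt{1-|\beta|^2}
\]
with $\beta$ determined by $s_2,s_3$. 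For $s_2=0$ one has $\beta=\frac{-\bar s_3\cdot 0}{1-|s_3|^2}$-type $=0$, so the bound reads $|s_1|<1+\ldots$; in fact $(0,s_3)\in\G_2\iff|s_3|<1$. I would verify that the supremum of $|s_1|$ is exactly $1$, which gives the equivalence with $\D^2$. The converse direction, $|a_{21}|\le\|A\|<1$ and $|\det A|\le\|A\|^2<1$, is immediate for any $A$, so only the existence direction needs the optimization.

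\emph{The symmetrized bidisc.} If $(0,s,p)\in\Pe$ via $A$, the eigenvalues $\lambda_1,\lambda_2$ of $A$ lie in $\D$ because $\|A\|<1$. Hence $s=\lambda_1+\lambda_2$ and $p=\lambda_1\lambda_2$, so $(s,p)\in\G_2$. Conversely, given $\lambda_1,\lambda_2\in\D$, I take $A=\begin{pmatrix}\lambda_1 & b\\ 0&\lambda_2\end{pmatrix}$ with $b=0$, i.e. $A=\operatorname{diag}(\lambda_1,\lambda_2)$. Then $a_{21}=0$, $\|A\|=\max|\lambda_i|<1$, and the image point is $(0,s,p)$. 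This part is routine.
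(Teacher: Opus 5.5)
Your argument for the symmetrized bidisc, and for the easy half of the bidisc claim ($|a_{21}|\le\|A\|<1$ and $|\det A|\le\|A\|^2<1$), is correct. The gap is the implication $(z^{(1)},z^{(2)})\in\D^2\Rightarrow(z^{(1)},0,z^{(2)})\in\Pe$. You call it ``the main technical step'' but never carry it out, and both things you do assert about it are wrong.

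First, the minimization. Write $c=z^{(1)}$ and $p=z^{(2)}$. The minimum of $2|a|^2+|b|^2$ under $a^2+bc=-p$ is not $2|p|$ at $b=0$. When $|c|^2\le|p|$ it is $2|p|-|c|^2$, attained with $b\neq 0$. With that value, your test $\|A\|_F^2<1+|\det A|^2$ becomes $(1-|p|)^2>0$. (That test is equivalent to $\|A\|<1$ only because $|\det A|<1$ here.)

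Second, the alternative bound $|s_1|<|1-\tfrac{s_2\bar\beta}{2}|+\sqrt{1-|\beta|^2}$ is misquoted. At $s_2=0$, where $\beta=0$, it would allow $|s_1|<2$, contradicting $|a_{21}|\le\|A\|<1$. The Agler--Lykova--Young criterion has right-hand side $\bigl|1-\frac{\frac12 s_2\bar\beta}{1+\sqrt{1-|\beta|^2}}\bigr|$, which equals $1$ when $s_2=0$.

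The paper only cites \cite{Pal_penta} for this statement, but the missing direction follows quickly from the description \eqref{eqn_P} recalled in Section 2. For the point $(c,0,p)$ one has $\Phi_\alpha(0,p)=\alpha p$, so the $\G_2$-conditions reduce to $|p|<1$. Then, for $|p|<1$,
\[
|\psi_\alpha(c,0,p)|=\frac{(1-|\alpha|^2)\,|c|}{|1+p\alpha^2|}\le |c| \qquad (\alpha\in\DC),
\]
because $|1+p\alpha^2|\ge 1-|p||\alpha|^2\ge 1-|\alpha|^2$, with equality at $\alpha=0$. Hence $(c,0,p)\in\Pe$ if and only if $|c|<1$ and $|p|<1$.

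If you prefer to stay with matrices, let $p=|p|e^{i\theta}$ and $A=\begin{pmatrix}a&b\\ c&-a\end{pmatrix}$. For $|p|<|c|$, your choice $a=0$, $b=-p/c$ already works. For $|c|\le|p|$ (so $|c|^2\le|c|\le|p|$), take $a^2=-(|p|-|c|^2)e^{i\theta}$ and $b=-\bar c\,e^{i\theta}$. Then $-a^2-bc=p$ and $\|A\|_F^2=2|p|=2|\det A|$. Both singular values therefore equal $\sqrt{|p|}$, and $\|A\|=\sqrt{|p|}<1$.
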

Capitalizing on Theorem \ref{thm_connect_P}, the authors of \cite{Pal2026} recovered the realization, interpolation and extension theorems on $\D^2$ and $\G_2$ within the pentablock framework. In the same spirit, we present alternative characterizations for the Toeplitz corona theorems on $\D^2$ and $\G_2$ as consequences of the corresponding theorem on $\Pe$. This shows that the function theories on $\D^2$ and $\G_2$ can be studied in a unified manner through the function theory on $\Pe$.
	 
	\section{Preliminaries}\label{sec_02}
	
	\noindent In this section, we recall some definitions and results from the literature that will be used in the sequel. We begin with the characterization of the pentablock  from \cite{AglerIV} given by
	\[
	\mathbb P= \left\{(z^{(1)}, z^{(2)}, z^{(3)}) \in \mathbb{C} \times \mathbb{G}_2 :
	\sup_{\alpha \in \mathbb{D}}
	\left|\frac{(1-|\al|^2)z^{(1)}}{1- z^{(2)}\al+z^{(3)}\al^2}\right|<1 \right\},
	\]
	where $\G_2=\{(\lm_1+\lm_2, \lm_1\lm_2): \lm_1, \lm_2 \in \D\}$ is the symmetrized bidisc. It was proved in \cite{Agler2004_II} that $(z^{(2)}, z^{(3)}) \in \G_2$ if and only if $|z^{(2)}|<2$ and $|2\al z^{(3)}-z^{(2)}|<|2-\al z^{(2)}|$ for every $\al \in \DC$. Consequently, a point $z=(z^{(1)}, z^{(2)}, z^{(3)}) \in \Pe$ if and only if 
	\begin{equation}\label{eqn_P}
		|z^{(2)}|<2, \quad |\Phi_\alpha(z^{(2)}, z^{(3)})|<1 \quad \text{and} \quad |\psi_{\al}(z^{(1)}, z^{(2)}, z^{(3)})|<1
	\end{equation}
	for all $\al \in \DC$, where 
	$\displaystyle \Phi_\alpha(z^{(2)}, z^{(3)})=\frac{2\al z^{(3)}-z^{(2)}}{2-\al z^{(2)}}$ and $\displaystyle \psi_{\al}(z^{(1)}, z^{(2)}, z^{(3)})=\frac{(1-|\al|^2)z^{(1)}}{1- z^{(2)}\al+z^{(3)}\al^2}$.
	Using the above description of $\Pe$, we define for $z = (z^{(1)}, z^{(2)}, z^{(3)}) \in \Pe$ the following functions:
	\begin{align}\label{eqn_J(z)}
		\mathrm{J}(z), \ \mathrm{j}(z): \DC \to \C, \quad \mathrm{J}(z)(\al)=\psi_{\al}(z^{(1)}, z^{(2)}, z^{(3)}) \quad \text{and} \quad \mathrm{j}(z)(\al) =\Phi_\al(z^{(2)}, z^{(3)}).
	\end{align}
	Clearly, $\mathrm{J}(z)$ and $\mathrm{j}(z)$ are continuous functions on $\DC$, and $\|\mathrm{J}(z)\|_{\infty, \DC}, \|\mathrm{j}(z)\|_{\infty, \DC}<1$ for $z \in \Pe$. The authors of \cite{Pal2026} introduced an operator-theoretic analog of the inequalities provided in \eqref{eqn_P}. To see this, we first consider the class 
	$
	\mathfrak{M}_{\G_2}=\left\{(T_2, T_3): \|T_2\|<2, \|\Phi_\al(T_2, T_3)\|<1 \ \text{for all} \ \al \in \DC\right\}.
	$
Note that $\sigma_T(T_2, T_3) \subseteq \G_2$ and so, holomorphic functional calculus yields an operator $f(T_2, T_3)$ for every $f \in \text{Hol}(\G_2)$. The Schur-Agler class $SA(\G_2)$ for $\G_2$ is the collection of all $f \in \text{Hol}(\G_2)$ such that $\|f(\underline{T})\| \leq 1$ for every $\underline{T} \in \mathfrak{M}_{\G_2}$. For the pentablock, the authors of \cite{Pal2026} considered the class of commuting triples given by
	\[
	\mathfrak{M}_{\Pe}=\left\{(T_1, T_2, T_3): (T_2, T_3) \in \mathfrak{M}_{\G_2} \ \text{and} \ \|\psi_{\al}(T_1, T_2, T_3)\|<1 \ \text{for every $\al \in \DC$} \right\}.
	\]
	It was proved in \cite{Pal2026} that $\sigma_T(T_1, T_2, T_3) \subseteq \Pe$ for all $(T_1, T_2, T_3) \in \mathfrak{M}_\Pe$. Consequently, we have a functional calculus $f(\underline{T})$ for all $f \in \mathrm{Hol}(\mathbb{P})$ and $\underline{T} \in \mathfrak{M}_\mathbb{P}$. So, one can consider the class given by
	\[
	SA(\Pe)=\left\{f \in \text{Hol}(\Pe) : \|f(\underline{T})\| \leq 1 \ \text{for all} \ \underline{T} \in \mathfrak{M}_\Pe\right\},
	\]
	which is called the Schur-Agler class for $\Pe$. One of the key results in \cite{Pal2026} is a realization theorem on $\Pe$, which is precisely the characterization of functions in $SA(\Pe)$. To do so, the authors \cite{Pal2026} introduced the notion of admissible kernels associated with $\Pe$. In this direction, we present the vector-valued analog of admissible kernels on $\Pe$. Let $\LS$ be a Hilbert space, and let $F$ be a subset of $\Pe$. A $\mathcal{B}(\LS)$-valued weak kernel of $F$ is a positive semi-definite function $k: F \times F \to \mathcal{B}(\LS)$, that is, 
	$
	\overset{n}{\underset{i, j=1}{\sum}}\la k(z_i, z_j)v_j, v_i\ra_{\LS} \geq 0 
	$
	for every finite set $\{z_1, \dotsc, z_n\} \subset F$ and vectors $v_1, \dotsc, v_n \in \LS$. In addition, if $k(z, z) \ne 0$ for all $z \in F$, we say that $k$ is a $\mathcal{B}(\LS)$-valued kernel. A $\mathcal{B}(\LS)$-valued kernel $k$ is said to be \textit{admissible} if the maps
	\begin{align*}
		& (z, w) \mapsto \left(1-\frac{z^{(2)}\overline{w}^{(2)}}{4}\right)k(z, w), \\
		& (z, w) \mapsto \left(1-\Phi_\al(z^{(2)}, z^{(3)})\overline{\Phi_\al( w^{(2)}, w^{(3)})}\right)k(z, w) \qquad \text{and} \\
		& (z, w) \mapsto \left(1-\psi_{\al}(z^{(1)}, z^{(2)}, z^{(3)})\overline{\psi_{\al}(w^{(1)}, w^{(2)}, w^{(3)})}\right)k(z, w)
	\end{align*}
	are positive semi-definite for all $\al \in \DC$. The class of scalar-valued admissible kernels on $\Pe$ is denoted by $AK(\Pe)$. A map $\xi: F \times F \to \mathcal{B}(C(\DC), \mathcal{B}(\LS))$ is called \textit{completely positive kernel} if for every $n \in \N, \{v_1, \dotsc, v_n\} \subset \LS, \{z_1, \dotsc, z_n\} \subset F$ and $\{h_1, \dotsc, h_n\} \subset C(\DC)$, we have
	\[
	\overset{n}{\underset{i, j=1}{\sum}}\la \xi(z_i, z_j)(h_i\overline{h}_j)v_j, v_i\ra_\LS \geq 0.
	\]
If $\LS=\C$, then $\mathcal{B}(C(\DC), \LS)$ is the dual space $C(\DC)^*$ of $C(\DC)$. In this case,  a completely positive kernel $\xi: F \times F \to C(\DC)^*$ is simply called a \textit{positive kernel}. In order to prove a Toeplitz corona theorem on $\Pe$, we need an operator-valued analogue of the realization theorem on $\Pe$ (see Theorem 2.7 in \cite{Pal2026}). Also, we shall require operator-valued analogues of some other scalar-valued results from \cite{Pal2026}. As we have already mentioned, we shall skip the proofs in most of cases as they are similar to corresponding the scalar-valued results. We begin with the following proposition which is an operator theoretic version of Proposition 2.2 in \cite{Pal2026}.

\begin{prop}\label{prop_prelim_I_P}
	Let $F$ be a subset of $\Pe$. If $\xi: F \times F \to  \mathcal{B}(C(\DC), \mathcal{B}(\LS))$ is a completely positive kernel, then there exist a Hilbert space $\HS$, a function $L: F \to \mathcal{B}(C(\DC), \mathcal{B}(\HS, \LS))$ and a unital $*$-representation $\rho: C(\DC) \to \mathcal{B}(\HS)$ such that
	\[
	\xi(z, w)(f\overline{g})=L(z)(f)(L(w)(g))^* \quad \text{and} \quad L(z)(fg)^*=\rho(f)^*L(z)(g)^*
	\]	
	for all $f, g \in C(\DC)$ and $z, w  \in F$. 
\end{prop}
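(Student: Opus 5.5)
We will run a GNS/Kolmogorov-type construction on the completely positive kernel $\xi$, in the spirit of Stinespring's dilation theorem. The approach is to build a Hilbert space from formal tensors indexed by points of $F$, functions in $C(\DC)$ and vectors in $\LS$, with inner product dictated by $\xi$. Let $V$ be the vector space of finitely supported formal sums $\sum_i \delta_{z_i}\otimes h_i\otimes v_i$ with $z_i\in F$, $h_i\in C(\DC)$, $v_i\in\LS$. On $V$ we define the sesquilinear form
\[
\Big\la \sum_j \delta_{w_j}\otimes g_j\otimes u_j,\ \sum_i \delta_{z_i}\otimes h_i\otimes v_i\Big\ra=\sum_{i,j}\la \xi(z_i,w_j)(h_i\overline{g}_j)u_j, v_i\ra_\LS .
\]
Complete positivity of $\xi$ says exactly that this form is positive semi-definite. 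We quotient by its null space and complete to get $\HS$.

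The representation $\rho$ is defined by multiplication in the $C(\DC)$ slot: $\rho(f)[\delta_w\otimes g\otimes u]=[\delta_w\otimes fg\otimes u]$. Boundedness is the key estimate, and it follows by applying complete positivity with $h_i$ replaced by $h_i\sqrt{\|f\|^2-|f|^2}$. Since $\|f\|^2-|f|^2=|r|^2$ with $r\ge 0$ continuous, we get $\|\rho(f)x\|^2\le\|f\|^2_\infty\|x\|^2$. Hence $\rho(f)$ preserves the null space and extends to $\HS$. It is clearly unital, multiplicative and linear, and $\la\rho(f)x,y\ra=\la x,\rho(\overline f)y\ra$ because the $h_i\overline g_j$ structure absorbs $f$ on either side. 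So $\rho$ is a unital $*$-representation.

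Next define $L(w)(g)^*:\LS\to\HS$ by $u\mapsto[\delta_w\otimes g\otimes u]$, and let $L(w)(g)$ be its adjoint. The bound $\|[\delta_w\otimes g\otimes u]\|^2=\la\xi(w,w)(|g|^2)u,u\ra\le\|\xi(w,w)\|\,\|g\|_\infty^2\|u\|^2$ holds because $\xi(w,w)$ is a positive map on $C(\DC)$ by the $n=1$ case, and positive maps are bounded. This makes $L(w)(g)^*$ bounded, so $L(w)(g)\in\mathcal{B}(\HS,\LS)$. Linearity and boundedness in $g$ are immediate.

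Then $\la L(z)(f)L(w)(g)^*u,v\ra=\la[\delta_w\otimes g\otimes u],[\delta_z\otimes f\otimes v]\ra=\la\xi(z,w)(f\overline g)u,v\ra$, which gives the factorization. Also $L(z)(fg)^*u=[\delta_z\otimes fg\otimes u]=\rho(f)L(z)(g)^*u$. This matches the stated identity provided the conventions line up: the stated $\rho(f)^*$ should correspond either to $\rho(\overline f)$ or to conjugating in the construction. If needed, we use the anti-linear placement $[\delta_z\otimes\overline{h}\otimes v]$ instead and adjust accordingly. The main obstacle is this bookkeeping of conjugations, together with the boundedness of $\rho$, which rests on the square-root trick above.
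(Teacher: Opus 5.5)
Your Kolmogorov/GNS construction is the natural route here; the paper itself omits the proof and defers to the scalar-valued case in the cited work. The square-root trick bounding $\rho(f)$ and the factorization $\xi(z,w)(f\overline g)=L(z)(f)L(w)(g)^*$ are both correct. However, there is a concrete error exactly at the step you call ``bookkeeping'', and as written the second identity is not established.

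Your form takes $\overline{g}_j$ from the \emph{first} argument. It is therefore conjugate-linear in the function slot but linear in the vector slot. Two consequences follow. First, $V$ must be the free vector space on $F\times C(\DC)\times\LS$; on an algebraic tensor product, where $\delta_w\otimes\lambda g\otimes u=\delta_w\otimes g\otimes\lambda u$, the form is not well defined. Second, in $\HS$ one has $[\delta_w\otimes\lambda g\otimes u]=\overline{\lambda}\,[\delta_w\otimes g\otimes u]$. Hence your multiplication map satisfies $\rho(\lambda f)=\overline{\lambda}\,\rho(f)$. It is conjugate-linear, so your claim that it is ``clearly linear'' and a $*$-representation is false. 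The same conjugation is why $L(w)$ is linear: $g\mapsto L(w)(g)^*$ is conjugate-linear, and taking adjoints undoes this, so say so rather than calling it immediate. The alternative you float does not help: with your form unchanged, $L(z)(h)^*v=[\delta_z\otimes\overline h\otimes v]$ yields $\xi(z,w)(\overline f g)$ instead of $\xi(z,w)(f\overline g)$.

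The repair is short. Keep your construction and set $\pi(f):=\rho(\overline f)$, that is, multiplication by $\overline f$ in the function slot. Then $\pi$ is linear, unital and multiplicative. Your correct computation $\rho(f)^*=\rho(\overline f)$ gives $\pi(f)^*=\pi(\overline f)=\rho(f)$. So your identity $L(z)(fg)^*=\rho(f)L(z)(g)^*$ reads exactly $L(z)(fg)^*=\pi(f)^*L(z)(g)^*$, with $\pi$ a genuine unital $*$-representation.

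A cleaner equivalent is to use the form $\sum_{i,j}\la\xi(z_i,w_j)(\overline{h}_i g_j)u_j,v_i\ra$. This is positive by complete positivity applied to the functions $\overline h_i$, and it is linear in the first argument, so an honest tensor product works. Then let $\pi(f)$ be multiplication by $f$ and put $L(z)(g)^*u=[\delta_z\otimes\overline g\otimes u]$. In Stinespring terms this is $L(z)(f)=V_z^*\pi(f)$ with $V_zu=[\delta_z\otimes 1\otimes u]$, which is why the statement carries $\rho(f)^*$ rather than $\rho(f)$.
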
	
Let $k_1, k_2$ be $\mathcal{B}(\LS)$-valued functions on $\Pe \times \Pe$. Following the notations due to Agler and McCarthy as in Definition 11.25 of \cite{Agler_McCarthy}, we set $k_1\oslash k_2: \Pe \times \Pe \to \mathcal{B}(\LS \otimes \LS)$ as $(k_1 \oslash k_2)(z, w)=k_1(z, w)\otimes k_2(z, w)$ for $z, w \in \Pe$. Our next result is a vector-valued analog of Theorem 2.3 in \cite{Pal2026} and again we skip its proof.	
	\begin{thm}\label{thm_sa_P}
	Let $g: \Pe \times \Pe \to \mathcal{B}(\LS)$ be a self-adjoint function, that is, $g(z, w)=g(w, z)^*$. If $g\oslash k$ is positive semi-definite for any $\mathcal{B}(\LS)$-valued admissible kernel $k$ on $\Pe$, then there exist completely positive kernels $\xi, \nabla : \Pe \times \Pe \to \mathcal{B}(C(\DC), \mathcal{B}(\LS))$ and a weak $\mathcal{B}(\LS)$-valued kernel $\delta$ on $\Pe$ such that
	\[
	g(z, w)=\xi(z, w)(1-\mathrm{J}(z)\overline{\mathrm{J}(w)})+\nabla(z, w)(1-\mathrm{j}(z)\overline{\mathrm{j}(w)})+(1-(z^{(2)}\overline{w}^{(2)}\slash 4))\delta(z, w)
	\]
	for all $z, w \in \Pe$. 
\end{thm}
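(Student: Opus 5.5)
We prove the decomposition by a cone-separation (Hahn--Banach) argument, following the scalar proof of Theorem 2.3 in \cite{Pal2026}. We first treat a finite set $F=\{z_1,\dots,z_n\}\subset\Pe$ and then pass to all of $\Pe$ by a compactness argument. For finite $F$, let $\mathcal{S}_F$ be the real vector space of self-adjoint functions $h:F\times F\to\mathcal{B}(\LS)$. Let $\mathcal{C}_F\subseteq\mathcal{S}_F$ be the set of all functions of the form
\[
\xi(z,w)(1-\mathrm{J}(z)\overline{\mathrm{J}(w)})+\nabla(z,w)(1-\mathrm{j}(z)\overline{\mathrm{j}(w)})+(1-z^{(2)}\overline{w}^{(2)}/4)\delta(z,w),
\]
where $\xi,\nabla$ are completely positive kernels on $F$ and $\delta$ is a weak $\mathcal{B}(\LS)$-valued kernel on $F$. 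Then $\mathcal{C}_F$ is a convex cone. We aim to show $g|_{F\times F}\in\mathcal{C}_F$.

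\textbf{Step 1: $\mathcal{C}_F$ is closed.} When $\dim\LS<\infty$, bound the summands by evaluating at the diagonal. At $z=w$ the three factors $1-|\mathrm{J}(z)|^2$, $1-|\mathrm{j}(z)|^2$ and $1-|z^{(2)}|^2/4$ are bounded below by a positive constant on $\DC$, uniformly in $z\in F$. So if $h_m\to h$ in $\mathcal{C}_F$, the operators $\xi_m(z,z)(1)$, $\nabla_m(z,z)(1)$ and $\delta_m(z,z)$ are uniformly bounded. Complete positivity gives $\|\xi_m(z,w)\|\le\|\xi_m(z,z)(1)\|^{1/2}\|\xi_m(w,w)(1)\|^{1/2}$, and similarly for $\nabla_m$. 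Extract weak-$*$ convergent subnets, using Banach--Alaoglu in $\mathcal{B}(C(\DC),\mathcal{B}(\LS))$ with the point-weak operator topology. The limits are again completely positive kernels and a weak kernel, so $h\in\mathcal{C}_F$. For infinite-dimensional $\LS$, either work with the weak operator topology and bounded nets throughout, or reduce to finite-dimensional compressions $P_E g P_E$ and pass to the limit in $E$ by the same compactness argument.

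\textbf{Step 2: separation.} Suppose $g|_F\notin\mathcal{C}_F$. Hahn--Banach gives a real linear functional $\Lambda$ on $\mathcal{S}_F$ with $\Lambda\ge0$ on $\mathcal{C}_F$ and $\Lambda(g)<0$. Represent $\Lambda$ via a kernel $k$ on $F$, namely $\Lambda(h)=\sum_{i,j}\mathrm{tr}\big(h(z_i,z_j)K_{ji}\big)$, and regard $k$ as a $\mathcal{B}(\LS')$-valued function for an auxiliary space $\LS'$.
\begin{itemize}
\item Testing $\Lambda$ on $\delta=vv^*$-type weak kernels, multiplied by $(1-z^{(2)}\overline{w}^{(2)}/4)$, shows that $(1-z^{(2)}\overline{w}^{(2)}/4)k$ is positive semi-definite.
\item Testing on $\xi(z,w)(f)=\varepsilon_\alpha(f)\,c(z)c(w)^*$ (point evaluations at $\alpha\in\DC$) shows that $(1-\psi_\alpha(z)\overline{\psi_\alpha(w)})k\ge0$ for every $\alpha\in\DC$; testing on $\nabla$ of the same form gives the corresponding statement for $\Phi_\alpha$.
\item Positivity of $k$ itself comes from a limiting test with $\alpha=0$ at $z^{(2)}=0$-type terms, or equivalently from adding $\epsilon$ times an admissible kernel. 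This perturbation also secures $k(z,z)\neq0$.
\end{itemize}
Hence $k$ is an admissible kernel on $F$, possibly after perturbing by $\epsilon$ times the identity kernel. The perturbation is legitimate because the Szeg\H{o}-type kernel for $\Pe$ is admissible and strictly positive, and a small multiple of it preserves $\Lambda(g)<0$.

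\textbf{Step 3: contradiction.} The hypothesis that $g\oslash k$ is positive semi-definite for admissible $k$ is stated on all of $\Pe$, so $k$ must be extended from $F$ to $\Pe$, or the hypothesis used only on $F$ (as in \cite{Pal2026}, where admissibility on $F$ suffices after extending by zero off $F$). The pairing $\Lambda(g)$ equals a sum $\sum\langle (g\oslash k)(z_i,z_j)e_j,e_i\rangle$ for the vectors $e_j$ obtained from the trace representation. This sum is therefore $\ge0$, a contradiction.

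\textbf{Step 4: passage to all of $\Pe$.} Normalize the components on each finite $F$; they are uniformly bounded on diagonals by Step 1. Take a weak-$*$ cluster point along the net of finite subsets using Tychonoff, which yields $\xi,\nabla,\delta$ on $\Pe\times\Pe$. I expect the main obstacle to be the closedness and compactness in Step 1 together with the correct identification of the dual cone in Step 2 for operator-valued $\LS$. The remaining steps mirror the scalar argument.
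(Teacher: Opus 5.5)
Your outline is correct and takes the same route as the paper, which omits the proof and defers to the scalar cone-separation argument of Theorem 2.3 in \cite{Pal2026}: closedness of the cone via diagonal bounds, Hahn--Banach producing an admissible kernel on $F$, the trace pairing, and a Tychonoff passage from finite sets to $\Pe$, with finite-dimensional compressions $P_E g P_E$ a sound way to handle infinite-dimensional $\LS$. Two justifications need replacing: positivity of $k$ comes neither from an $\alpha=0$ test (there $\psi_0(z)=z^{(1)}$) nor from adding an admissible kernel, but at once from your own $\psi_\alpha$-test with $|\alpha|=1$, where $\psi_\alpha\equiv 0$; and to secure $k(z,z)\neq 0$ you can add a small multiple of the trivially admissible diagonal kernel (equal to $I_{\LS}$ at $(z,z)$ and $0$ off the diagonal) instead of an unverified Szeg\H{o}-type kernel, keeping the separating kernel $\mathcal{B}(\LS)$-valued (not valued in an auxiliary $\LS'$), as the hypothesis requires.
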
		

To present the statement of operator-valued realization theorem for $\Pe$, we need to define a vector-valued analog of the Schur-Agler class $SA(\Pe)$. To do so, we recall certain notions of analytic functional calculus for which we refer to \cite{Ambrozie, Esch}. For a domain $\Omega$ in $\C^d$ and Hilbert spaces $\HS_1, \HS_2$, let $f: \Omega \to \mathcal{B}(\HS_1, \HS_2)$ be an analytic function. Suppose $\underline{T}=(T_1, \dotsc, T_d)$ is a commuting tuple of operators on a Hilbert space $\HS$ with $\sigma_T(\underline{T}) \subseteq \Omega$. Then $f(\underline{T}): \HS \otimes \HS_1 \to \HS \otimes \HS_2$ is an bounded linear map defined by analytic functional calculus. We discuss a description of $f(\underline{T})$ presented in \cite{Esch}. Given an open neighbourhood $U$ of $\sigma_T(\underline{T})$, there is a unique continuous linear map $f \mapsto f(\underline{T})$ from $\text{Hol}(U, \mathcal{B}(\HS_1, \HS_2)) \equiv \text{Hol}(U)\otimes \mathcal{B}(\HS_1, \HS_2)$ into $\mathcal{B}(\HS \otimes \HS_1, \HS \otimes \HS_2)$ taking $f \otimes A$ into $f(\underline{T}) \otimes A$ for $f \in \text{Hol}(U)$ and $A \in \mathcal{B}(\HS_1, \HS_2)$. Consequently, we have 
	\[
	\la f(\underline{T})(h_1 \otimes x), h_2 \otimes y\ra=\la f_{x, y}(\underline{T})h_1, h_2 \ra 
	\]
	for every $h_1, h_2 \in \HS, x \in \HS_1$ and $y \in \HS_2$, where $f_{x, y}: \Omega \to \C$ is the scalar-valued holomorphic map defined by $f_{x, y}(z)=\la f(z)x, y\ra$. For further clarity, we consider the finite-dimensional case here. If $\HS_1=\C^q, \HS_2=\C^p$ and $f=[f_{ij}]_{i, j}$, then we have the isometric isomorphisms $\HS \otimes \HS_1 \equiv \HS^q$ and $\HS \otimes \HS_2 \equiv \HS^p$ and so, $f(\underline{T})=[f_{ij}(\underline{T})]_{i, j} \in \mathcal{B}(\HS^q, \HS^p)$ with $f_{ij}(\underline{T}) \in \mathcal{B}(\HS)$. A vector-valued analog of Schur-Agler type class of $\Pe$ can be defined now in the following way. 
	
	\begin{defn}
	For Hilbert spaces $\LS_1, \LS_2$, the $\mathcal{B}(\LS_1, \LS_2)$-valued Schur-Agler class $SA_\Pe(\LS_1, \LS_2)$ is the class of all holomorphic functions $f: \Pe \to \mathcal{B}(\LS_1, \LS_2)$ such that $\|f(\underline{T})\| \leq 1$ for every $\underline{T} \in \mathfrak{M}_\Pe$. 
	\end{defn}

For a holomorphic map $f: \Pe \to \mathcal{B}(\LS_1, \LS_2)$, and $\{x_1,\dotsc, x_m\}\subseteq \LS_1$ and $\{y_1,\dotsc, y_n\}\subseteq \LS_2$, we say that the matrix-valued function
$
F=[f_{x_i, y_j}]_{j,i}
$
belongs to the unit ball of $M_{n,m}(SA(\Pe))$ if $\|F(\underline{T})\| \leq 1$ for all $\underline{T} \in \mathfrak{M}_\Pe$. The operator-valued Schur-Agler class $SA_\Pe(\LS_1, \LS_2)$ can be completely characterized in terms of the Schur-Agler class $SA(\Pe)$ as the following lemma shows.

\begin{lem}\label{lem_201}
	Let $\LS_1$ and $\LS_2$ be Hilbert spaces, and let
	$f:\Pe \to \mathcal{B}(\LS_1,\LS_2)$ be a holomorphic map. Then  $f\in SA_{\Pe}(\LS_1,\LS_2)$ if and only if for all finite orthonormal subsets $\{e_1,\dotsc, e_m\}\subseteq \LS_1$ and $\{e'_1,\dotsc, e'_n\}\subseteq \LS_2$, the matrix-valued function
$
		[f_{e_i, e'_j}]_{j,i}
$
belongs to the unit ball of $M_{n,m}(SA(\Pe))$, where
		$
		f_{x,y}(z)=\la f(z)(x),y\ra_{\LS_2}$ for $x \in \LS_1, y \in \LS_2$ and $z \in \Pe$.
		
\end{lem}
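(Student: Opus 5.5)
The plan is to identify the operator $f(\underline{T})$ with a compression picture and then use that the norm of an operator equals the supremum of the norms of its finite compressions. Fix $\underline{T} \in \mathfrak{M}_\Pe$ acting on $\HS$. For finite orthonormal sets $\{e_1,\dotsc,e_m\}\subseteq \LS_1$ and $\{e'_1,\dotsc,e'_n\}\subseteq \LS_2$, let $E=\operatorname{span}\{e_i\}$ and $E'=\operatorname{span}\{e'_j\}$, with $P_E, P_{E'}$ the orthogonal projections. The key identity we aim for is
\[
(I_\HS\otimes P_{E'})\, f(\underline{T})\, (I_\HS\otimes P_E)\Big|_{\HS\otimes E} \;\cong\; [f_{e_i,e'_j}(\underline{T})]_{j,i} \in \mathcal{B}(\HS^m,\HS^n),
\]
under the unitary identifications $\HS\otimes E\equiv \HS^m$ via $\oplus_i h_i \mapsto \sum_i h_i\otimes e_i$, and similarly for $E'$. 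This follows from the description of the functional calculus recalled from \cite{Esch}: for $h_1,h_2\in\HS$ we have $\la f(\underline{T})(h_1\otimes e_i), h_2\otimes e'_j\ra=\la f_{e_i,e'_j}(\underline{T})h_1,h_2\ra$. Expanding $\la f(\underline{T})\sum_i h_i\otimes e_i, \sum_j k_j\otimes e'_j\ra$ bilinearly then gives exactly the $(j,i)$-entry identification.

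For necessity, suppose $\|f(\underline{T})\|\le 1$ for all $\underline{T}\in\mathfrak{M}_\Pe$. A compression of a contraction is a contraction, so $\|[f_{e_i,e'_j}(\underline{T})]\|\le 1$ for every $\underline{T}$. Since each $f_{e_i,e'_j}$ is holomorphic on $\Pe$, this is precisely membership of the matrix function in the unit ball of $M_{n,m}(SA(\Pe))$.

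For sufficiency, we use that finite-rank projections $I\otimes P_E$ increase strongly to $I_{\HS\otimes\LS_1}$ along the net of finite-dimensional subspaces $E$ (this works even in the separable setting by taking spans of initial segments of orthonormal bases). The same holds for $E'$. Then for $x\in\HS\otimes\LS_1$ and $y\in\HS\otimes\LS_2$,
\[
|\la f(\underline{T})x,y\ra| = \lim |\la f(\underline{T})(I\otimes P_E)x,(I\otimes P_{E'})y\ra| \le \|x\|\|y\|,
\]
since each compression has norm at most $1$ by hypothesis and the identity above. Hence $\|f(\underline{T})\|\le 1$.

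The main obstacle is justifying the compression identity rigorously at the level of the infinite-dimensional functional calculus $\text{Hol}(U)\otimes\mathcal{B}(\LS_1,\LS_2)\to\mathcal{B}(\HS\otimes\LS_1,\HS\otimes\LS_2)$. In particular, one must check that the scalar pairing formula holds on elementary tensors, which is stated in the recalled description, and then extend it by linearity and density. The density step needs boundedness of $f(\underline{T})$, which is part of that description.
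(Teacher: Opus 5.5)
Your proposal is correct and follows essentially the same route as the paper. Both arguments use the pairing $\langle f(\underline{T})(h\otimes e_i), h'\otimes e'_j\rangle=\langle f_{e_i,e'_j}(\underline{T})h,h'\rangle$ and orthonormality to identify $[f_{e_i,e'_j}(\underline{T})]_{j,i}$ with a compression of $f(\underline{T})$, which gives necessity directly and sufficiency by approximation; your projections $I\otimes P_E\to I$ (strongly) are a tidier packaging of the paper's step of reducing to finite sums of elementary tensors with orthonormalized vectors and then taking limits.
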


\begin{proof}
	Suppose $f\in SA_{\Pe}(\LS_1,\LS_2)$. Let $\underline{T}\in \mathfrak{M}_{\Pe}$ be acting on a Hilbert space $\HS$. Take finite orthonormal subsets $\{e_1,\dotsc,e_m\}$ and $\{e'_1,\dotsc, e'_n\}$ in  $\LS_1$ and $\LS_2$, respectively. Define $F=[f_{e_i, e'_j}]_{j,i}$. Let $h_1, \dotsc, h_m, h'_1, \dotsc, h'_n \in \HS$. 	Then
		\begin{align*}
		\la F(\underline{T})(h_1,\dotsc,h_m),(h'_1,\dotsc,h'_n)\ra 
=
		\sum_{j=1}^n
		\left\la
		\sum_{i=1}^m f_{e_i, e'_j}(\underline{T})h_i,
		h'_j
		\right\ra 
&	=	\sum_{i=1}^m\sum_{j=1}^n
		\la f(\underline{T})(h_i\otimes e_i),h'_j\otimes e'_j\ra \\
		&=
		\left\la
		f(\underline{T})
		\left(
		\sum_{i=1}^m h_i\otimes e_i
		\right),
		\sum_{j=1}^n h'_j\otimes e'_j
		\right\ra.
	\end{align*}
Since $\{e_i : 1 \leq i \leq m\}$ and $\{e'_j: 1 \leq j \leq n\}$ are orthonormal subsets, it follows that
	\begin{align*}
		\left\|
		\sum_{i=1}^m h_i\otimes e_i
		\right\|^2
		=
		\sum_{i,r=1}^m
		\la h_i,h_r\ra
		\la e_i,e_r\ra 
		=
		\sum_{i=1}^m \|h_i\|^2 \quad \text{and similarly,} \quad 	\left\|
		\sum_{j=1}^n h'_j\otimes e'_j
		\right\|^2
		=
		\sum_{j=1}^n \|h'_j\|^2.
	\end{align*}
Therefore,
	\begin{align*}
	\left|
	\la F(\underline{T})(h_1,\dotsc,h_m),(h'_1,\dotsc,h'_n)\ra
	\right| 
	 \leq
	\left\|
	\sum_{i=1}^m h_i\otimes e_i
	\right\|
	\left\|
	\sum_{j=1}^n h'_j\otimes e'_j
	\right\|
		& = 
		\|(h_1,\dotsc,h_m)\|
		\,
		\|(h'_1,\dotsc,h'_n)\|.
	\end{align*}
	Hence, $\|F(\underline{T})\|\le 1$ and so, $F$ is in the unit ball of $M_{n,m}(SA(\Pe))$. 
	
	\smallskip 
	
	To see the converse, let
	$\underline{T}\in \mathfrak{M}_{\Pe}$ be acting on a Hilbert space $\HS$. We show that
	$\|f(\underline{T})\|\le 1$. Take $u\in \HS\otimes \LS_1$ and $v\in \HS\otimes \LS_2$. 
	Since finite sums of elementary tensors are dense in $\HS\otimes \LS_1$ and $\HS\otimes \LS_2$, and after replacing the vectors appearing in the tensor representations of $u$ and $v$ by orthonormal bases for their spans, it suffices to consider 
	\[
	u=\sum_{i=1}^m h_i\otimes e_i \quad \text{and}	\quad v=\sum_{j=1}^n h'_j\otimes e'_j,
	\] 
	where $\{e_1,\dotsc,e_m\}\subseteq \LS_1$ and $\{e'_1,\dotsc, e'_n\}\subseteq \LS_2$
	are orthonormal sets. Define $F=[f_{e_i,e'_j}]_{j,i}$. By given hypothesis, $F$ belongs to the unit ball of $M_{n,m}(SA(\Pe))$. Then
	\begin{align*}
		\la f(\underline{T})u,v\ra
		=
		\sum_{i=1}^m\sum_{j=1}^n
		\la f(\underline{T})(h_i\otimes e_i),h'_j\otimes e'_j\ra 
		=
		\sum_{i, j}
		\la f_{e_i, e'_j}(\underline{T})h_i,h'_j\ra 
		=
		\la
		F(\underline{T})(h_1,\dotsc,h_m),
		(h'_1,\dotsc,h'_n)
		\ra.
	\end{align*}
Since $\|F(\underline{T})\|\le 1$, we have that $|\la f(\underline{T})u,v\ra| \leq 		\|(h_1,\dotsc,h_m)\|
\,
\|(h'_1,\dotsc,h'_n)\|=\|u\| \|v\|$. By limiting arguments, it follows that $\|f(\underline{T})\| \leq 1$ and thus, $f\in SA_{\Pe}(\LS_1,\LS_2)$.
\end{proof}

A realization theorem on $\Pe$, which provides a characterization of functions in the Schur-Agler class $SA(\Pe)$, was established in Theorem 2.7 of \cite{Pal2026}. We now extend this result to vector-valued setting and also skip its proof as it follows the same arguments as in the proof of Theorem 2.7 in \cite{Pal2026}.
	\begin{thm}\label{thm_realization_P}
	Let $\LS_1$ and  $\LS_2$ be Hilbert spaces. For a function $f: \Pe \to \mathcal{B}(\LS_1, \LS_2)$, the following statements are equivalent: 
	\begin{enumerate}[leftmargin=*]
		\item[$(1)$] $f \in SA_\Pe(\LS_1, \LS_2)$;
		\item[$(2)$] $(z, w) \mapsto (I_{\LS_2}-f(z)f(w)^*)\oslash k(z, w)$ is a positive semi-definite function for all $\mathcal{B}(\LS_2)$-valued admissible kernel $k$ on $\Pe$;
		\item[$(3)$] there exist completely positive kernels $\xi, \nabla: \Pe \times \Pe \to \mathcal{B}(C(\DC), \mathcal{B}(\LS_2))$ and a $\mathcal{B}(\LS_2)$-valued weak kernel $\delta: \Pe \times \Pe \to \mathcal{B}(\LS_2)$ such that
		\begin{align*}
			I_{\LS_2}-f(z)f(w)^*
			=\xi(z, w)(1-\mathrm{J}(z)\overline{\mathrm{J}(w)})+\nabla(z, w)(1-\mathrm{j}(z)\overline{\mathrm{j}(w)})+\left(1-\frac{z^{(2)}\overline{w}^{(2)}}{4}\right)\delta(z, w).
		\end{align*}
		\item[$(4)$] there exist Hilbert spaces $\HS_1, \HS_2, \HS_3$,  two unital $*$-representations $\rho_1: C(\DC) \to \mathcal{B}(\HS_1), \rho_2: C(\DC) \to \mathcal{B}(\HS_2)$ and a unitary $V=\begin{bmatrix}
			A & B \\
			C & D \\
		\end{bmatrix}: \LS_1 \oplus \HS \to \LS_2 \oplus \HS$, where $\HS=\HS_1 \oplus \HS_2 \oplus \HS_3$ such that 
		\[
		f(z)=A+B\begin{bmatrix} \rho_1(\mathrm{J}(z)) & 0 & 0\\ 
			0 & \rho_2(\mathrm{j}(z)) & 0\\
			0 & 0 & (z^{(2)}\slash 2)I_{\HS_3}
		\end{bmatrix} \left(I-D\begin{bmatrix} \rho_1(\mathrm{J}(z)) & 0 & 0\\ 
		0 & \rho_2(\mathrm{j}(z)) & 0\\
		0 & 0 & (z^{(2)}\slash 2)I_{\HS_3}
		\end{bmatrix}\right)^{-1}C.
		\]  
	\end{enumerate}
\end{thm}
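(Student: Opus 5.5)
I would prove the cycle $(1)\Rightarrow(2)\Rightarrow(3)\Rightarrow(4)\Rightarrow(1)$, following the scalar argument of Theorem 2.7 in \cite{Pal2026}.

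\emph{$(2)\Rightarrow(3)$.} Apply Theorem \ref{thm_sa_P} to the self-adjoint function $g(z,w)=I_{\LS_2}-f(z)f(w)^*$. Its hypothesis is exactly $(2)$, so $(3)$ follows at once.

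\emph{$(3)\Rightarrow(4)$.} This is a lurking isometry argument.
\begin{enumerate}[leftmargin=*]
\item Apply Proposition \ref{prop_prelim_I_P} to $\xi$ and to $\nabla$. This gives Hilbert spaces $\HS_1,\HS_2$, unital $*$-representations $\rho_1,\rho_2$, and functions $L_1,L_2$.
\item Factor the weak kernel as $\delta(z,w)=L_3(z)L_3(w)^*$ with $L_3(z)\in\mathcal{B}(\HS_3,\LS_2)$, by Kolmogorov decomposition.
\item Use the module property $L(z)(fg)^*=\rho(f)^*L(z)(g)^*$ to rewrite $\xi(z,w)(1-\mathrm{J}(z)\overline{\mathrm{J}(w)})$ as $L_1(z)(1)\bigl(I-\rho_1(\mathrm{J}(z))\rho_1(\mathrm{J}(w))^*\bigr)L_1(w)(1)^*$, and treat $\nabla$ in the same way with $\mathrm{j}$.
\item Set $G(z)=\bigl(L_1(z)(1),L_2(z)(1),L_3(z)\bigr):\HS\to\LS_2$ and $E(z)=\rho_1(\mathrm{J}(z))\oplus\rho_2(\mathrm{j}(z))\oplus (z^{(2)}/2)I$. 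The identity in $(3)$ then rearranges to
\[
I+G(z)E(z)E(w)^*G(w)^*=f(z)f(w)^*+G(z)G(w)^*.
\]
\item Define the map $\begin{bmatrix} y\\ E(w)^*G(w)^*y\end{bmatrix}\mapsto\begin{bmatrix} f(w)^*y\\ G(w)^*y\end{bmatrix}$ on the span of such vectors in $\LS_2\oplus\HS$. By the identity above it is an isometry.
\item Extend it to a unitary, enlarging $\HS_3$ by an infinite-dimensional summand on which $\rho$ is trivially adjoined if necessary. Let $V$ be the adjoint of this unitary, so $V:\LS_1\oplus\HS\to\LS_2\oplus\HS$, with blocks $A,B,C,D$.
\item Since $\|E(z)\|<1$ on $\Pe$ (because $\|\mathrm{J}(z)\|_\infty$, $\|\mathrm{j}(z)\|_\infty$ and $|z^{(2)}|/2$ are all $<1$), the operator $I-DE(z)$ is invertible. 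Solving the block equations then gives the formula for $f(z)$ in $(4)$.
\end{enumerate}
The only extra care compared with the scalar case is the index bookkeeping for the operator-valued kernels.

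\emph{$(4)\Rightarrow(1)$.} Fix $\underline{T}\in\mathfrak{M}_\Pe$ on $\KS$.
\begin{enumerate}[leftmargin=*]
\item Define $E(\underline{T})$ blockwise. Take $T_2/2$ on the third block, and $(\rho_1\otimes\mathrm{id})$ applied to the continuous $\mathcal{B}(\KS)$-valued map $\al\mapsto\psi_\al(\underline{T})$ on the first block. This is legitimate because a unital $*$-representation of $C(\DC)$ extends to $C(\DC,\mathcal{B}(\KS))$ as a contraction for the sup norm. Treat $\Phi_\al$ on the second block the same way.
\item By compactness of $\DC$ and the strict inequalities defining $\mathfrak{M}_\Pe$, we get $\|E(\underline{T})\|<1$.
\item The transfer function $A+BE(I-DE)^{-1}C$, built with $V\otimes I$ unitary, is then a contraction, by the standard identity $I-F^*F=C^*(I-DE)^{-*}(I-E^*E)(I-DE)^{-1}C\ge0$.
\end{enumerate}

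\textbf{Main obstacle.} The hard step is checking that this operator expression coincides with $f(\underline{T})$ from the analytic functional calculus. I would expand $(I-DE)^{-1}$ as a Neumann series and use the uniqueness and continuity of the Taylor functional calculus on polynomials in $\psi_\al$, $\Phi_\al$ and $z^{(2)}$. Alternatively, I would compare the scalar entries $f_{x,y}$ via Lemma \ref{lem_201} and the scalar case.

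\emph{$(1)\Rightarrow(2)$.} Given a $\mathcal{B}(\LS_2)$-valued admissible kernel $k$:
\begin{enumerate}[leftmargin=*]
\item Fix finitely many points $z_1,\dots,z_n$.
\item Realize multiplication by the coordinates on the reproducing kernel space of $k\oslash$ (a suitable perturbation) as the adjoint of a tuple in $\mathfrak{M}_\Pe$. Admissibility gives the non-strict inequalities; shrink by a factor $r<1$ to get strict ones.
\item Read off positivity of $(I-f(z_i)f(z_j)^*)\otimes k(z_i,z_j)$ from $\|f(r\underline{T})\|\le1$, then let $r\to1$.
\end{enumerate}

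This is exactly where the tensor $\oslash$ is used: $f(\underline{T})^*$ acts on the kernel functions by $f(z_i)^*$ in the $\LS_2$ factor.
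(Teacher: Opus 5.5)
Your proposal is correct and takes essentially the paper's route: the paper skips this proof and defers to the scalar Theorem 2.7 of \cite{Pal2026}, and your two substantive steps (the reproducing-kernel, adjoint-tuple and scaling argument for $(1)\Rightarrow(2)$, and Proposition \ref{prop_prelim_I_P} plus a lurking isometry for $(3)\Rightarrow(4)$) are the ones the paper itself writes out in the proof of Theorem \ref{thm_TC_P}. One thing to tidy in $(1)\Rightarrow(2)$: the adjoint tuple on $\overline{\text{span}}\{k_{z_i}h\}$ has joint eigenvalues $\overline{z}_i$, so apply $\widehat f(z)=f(\overline{z})^*$ to it, as the paper does, to obtain $f(z_i)$ rather than $f(\overline{z}_i)$; the strict inequalities after scaling follow from $\Phi_\al(s\cdot\underline{T})=s\,\Phi_{s\al}(\underline{T})$ and $\psi_\al(s\cdot\underline{T})=s\,\frac{1-|\al|^2}{1-s^2|\al|^2}\,\psi_{s\al}(\underline{T})$.
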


\section{Toeplitz corona theorem on the pentablock}\label{sec_03}

\noindent In this section, we first state and prove a Toeplitz corona theorem on the pentablock $\Pe$. Further, we obtain new characterizations for Toeplitz corona theorems on $\D^2$ and $\G_2$ in terms of admissible kernels and completely positive kernels on $\Pe$. We begin with an important theorem.

\begin{thm}\label{thm_TC_P}
	Let $\LS_1, \LS_2$ and $\LS_3$ be Hilbert spaces. For functions $\Upsilon_{12}: \Pe \to \mathcal{B}(\LS_1, \LS_2)$ and $\Upsilon_{32}: \Pe \to \mathcal{B}(\LS_3, \LS_2)$, the following are equivalent: 
	\begin{enumerate}[leftmargin=*]
		\item[$(1)$] there exists $\Upsilon_{31} \in SA_\Pe(\LS_3, \LS_1)$ such that $\Upsilon_{12}(z)\Upsilon_{31}(z)=\Upsilon_{32}(z)$ for every $z \in \Pe$;
		\item[$(2)$] for every finite subset $F=\{z_1, \dotsc, z_n\} \subset \Pe$,
		\[
		\begin{bmatrix}
			\left(\Upsilon_{12}(z_i)\Upsilon_{12}(z_j)^*-\Upsilon_{32}(z_i)\Upsilon_{32}(z_j)^*\right)\otimes k(z_i, z_j)
		\end{bmatrix}_{i, j=1}^n \geq 0
		\]
	for every $\mathcal{B}(\LS_2)$-valued admissible kernel $k$ on $\Pe$;
	\item[$(3)$] there exist completely positive kernels $\xi, \nabla: \Pe \times \Pe \to \mathcal{B}(C(\DC), \mathcal{B}(\LS_2))$ and a weak kernel $\delta: \Pe \times \Pe \to \mathcal{B}(\LS_2)$ such that for all $z, w \in \Pe$,
	\begin{align*}
	&\Upsilon_{12}(z)\Upsilon_{12}(w)^*-\Upsilon_{32}(z)\Upsilon_{32}(w)^*\\
	&=\xi(z, w)(1-\mathrm{J}(z)\overline{\mathrm{J}(w)})+\nabla(z, w)(1-\mathrm{j}(z)\overline{\mathrm{j}(w)})+\left(1-\frac{z^{(2)}\overline{w}^{(2)}}{4}\right)\delta(z, w),
	\end{align*}
where the maps $z\mapsto \mathrm{J}(z)$ and $z\mapsto \mathrm{j}(z)$ are as in \eqref{eqn_J(z)}.	 	
	\end{enumerate}
\end{thm}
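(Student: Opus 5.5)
We will prove the cycle $(1)\Rightarrow(2)\Rightarrow(3)\Rightarrow(1)$, following the scheme of Agler--McCarthy for the bidisc and of \cite{Tirtha_Sau} for $\G_2$.

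\emph{The implication $(1)\Rightarrow(2)$.} Let $\Upsilon_{31}\in SA_\Pe(\LS_3,\LS_1)$ with $\Upsilon_{12}\Upsilon_{31}=\Upsilon_{32}$. By Theorem \ref{thm_realization_P}, $(1)\Rightarrow(2)$ applied with target space $\LS_1$, the map $(I_{\LS_1}-\Upsilon_{31}(z)\Upsilon_{31}(w)^*)\oslash k'$ is positive semi-definite for every $\mathcal{B}(\LS_1)$-valued admissible kernel $k'$. Then we write
\[
\Upsilon_{12}(z)\Upsilon_{12}(w)^*-\Upsilon_{32}(z)\Upsilon_{32}(w)^*=\Upsilon_{12}(z)\bigl(I-\Upsilon_{31}(z)\Upsilon_{31}(w)^*\bigr)\Upsilon_{12}(w)^*.
\]
For a $\mathcal{B}(\LS_2)$-valued admissible $k$, positivity of the tensor with $k$ has to be transferred. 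The cleanest route is to use $(3)$ of Theorem \ref{thm_realization_P} for $\Upsilon_{31}$. This gives an Agler decomposition of $I-\Upsilon_{31}\Upsilon_{31}^*$ with completely positive kernels $\xi',\nabla'$ and weak kernel $\delta'$. Conjugating by $\Upsilon_{12}(z)\,\cdot\,\Upsilon_{12}(w)^*$ preserves complete positivity and weak-kernel positivity. We therefore obtain directly a decomposition of the form in $(3)$ with $\xi=\Upsilon_{12}\xi'\Upsilon_{12}^*$, and similarly for the other two pieces. So we in fact get $(1)\Rightarrow(3)$.

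\emph{The implication $(3)\Rightarrow(2)$.} This is the routine direction. Each summand tensored with an admissible $k$ is positive. For the $\delta$ term, this holds because the Schur (tensor) product of positive kernels is positive and $(1-z^{(2)}\overline{w}^{(2)}/4)k$ is positive. For the $\xi$ term, we use Proposition \ref{prop_prelim_I_P} to write $\xi(z,w)(h)=L(z)\rho(h)L(w)^*$. We then approximate $1-\mathrm{J}(z)\overline{\mathrm{J}(w)}$ through the spectral measure of $\rho$ by integrals over $\al$ of the positive kernels $(1-\psi_\al(z)\overline{\psi_\al(w)})k$. So it suffices to prove $(3)\Rightarrow(2)$ and then $(2)\Rightarrow(1)$.

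\emph{The implication $(2)\Rightarrow(1)$.} This is the main step. First, $(2)$ on all finite sets means $g\oslash k\ge 0$ for $g(z,w)=\Upsilon_{12}(z)\Upsilon_{12}(w)^*-\Upsilon_{32}(z)\Upsilon_{32}(w)^*$, which is self-adjoint. Theorem \ref{thm_sa_P} then gives the decomposition $(3)$. Rearranging $(3)$ gives a lurking-isometry identity. Applying Proposition \ref{prop_prelim_I_P} to $\xi$ and $\nabla$ factors them as $L_1(z)\rho_1(\cdot)L_1(w)^*$ and $L_2(z)\rho_2(\cdot)L_2(w)^*$, and we factor $\delta(z,w)=M(z)M(w)^*$. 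Put $E(z)=\mathrm{diag}(\rho_1(\mathrm{J}(z)),\rho_2(\mathrm{j}(z)),z^{(2)}/2)$ and $\Lambda(z)=[L_1(z)(1)\ L_2(z)(1)\ M(z)]$. Then for all $z,w$,
\[
\Upsilon_{12}(z)\Upsilon_{12}(w)^*+\Lambda(z)E(z)E(w)^*\Lambda(w)^*=\Upsilon_{32}(z)\Upsilon_{32}(w)^*+\Lambda(z)\Lambda(w)^*.
\]
The map sending $\Upsilon_{12}(w)^*y\oplus E(w)^*\Lambda(w)^*y$ to $\Upsilon_{32}(w)^*y\oplus\Lambda(w)^*y$ is therefore a well-defined isometry $V$ between the closed spans. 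After enlarging $\HS$ by an infinite-dimensional summand, $V$ extends to a contraction $\begin{bmatrix}A&B\\C&D\end{bmatrix}:\LS_1\oplus\HS\to\LS_3\oplus\HS$ (adjoint orientation as needed). Solving the equations gives
\[
\Upsilon_{32}(w)^*=\bigl(A+BE(w)^*(I-DE(w)^*)^{-1}C\bigr)\Upsilon_{12}(w)^*.
\]
The inverse exists because $\|E(w)\|<1$ on $\Pe$.

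We then define $\Upsilon_{31}(z)$ as the adjoint of the transfer-function realization, suitably conjugated so that it is holomorphic in $z$. The choice of orientation is made so that $E(z)$ rather than $E(z)^*$ appears. This yields $\Upsilon_{12}\Upsilon_{31}=\Upsilon_{32}$. By $(4)\Rightarrow(1)$ of Theorem \ref{thm_realization_P}, $\Upsilon_{31}\in SA_\Pe(\LS_3,\LS_1)$; this requires a unitary, so we dilate the contraction to a unitary on an enlarged space, which preserves the form of the realization.

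The main obstacle is setting up the lurking isometry with the correct sides and adjoints, together with the contraction-to-unitary extension, so that the realization formula of Theorem \ref{thm_realization_P}(4) applies verbatim.
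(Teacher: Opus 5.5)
Your proposal is correct. Your sufficiency argument is the paper's: $(2)\Rightarrow(3)$ via Theorem \ref{thm_sa_P}, then the lurking isometry built from Proposition \ref{prop_prelim_I_P} and $\delta=MM^*$, then Theorem \ref{thm_realization_P}$(4)\Rightarrow(1)$ for the adjoint colligation.

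\textbf{Where you differ: necessity.} The paper proves $(1)\Rightarrow(2)$ straight from the definition of $SA_\Pe$:
\begin{itemize}
\item it models the admissible kernel $k$ by its reproducing kernel Hilbert space;
\item it checks that the adjoint coordinate multipliers, compressed to $\overline{\mathrm{span}}\{k_{z_i}h\}$ and rescaled to $s\cdot\underline{T}^*$, lie in $\mathfrak{M}_\Pe$ (by quasi-balancedness);
\item it computes $\widehat f(s\cdot\underline{T}^*)$, with $\widehat f(z)=f(\overline{z})^*$, on kernel functions via the functional calculus;
\item it then lets $s\to 1$ and $r\to 1$.
\end{itemize}
You instead take the Agler decomposition of $I_{\LS_1}-\Upsilon_{31}(z)\Upsilon_{31}(w)^*$ supplied by Theorem \ref{thm_realization_P}$(1)\Rightarrow(3)$. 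Conjugating it by $\Upsilon_{12}(z)\,\cdot\,\Upsilon_{12}(w)^*$ gives $(1)\Rightarrow(3)$ at once. You then prove $(3)\Rightarrow(2)$ by writing $\xi(z,w)(h)=L(z)\rho(h)L(w)^*$. You approximate $1-\mathrm{J}(z_i)\overline{\mathrm{J}(z_j)}$ uniformly on $\DC$ by sums $\sum_m u_m\bigl(1-\psi_{\al_m}(z_i)\overline{\psi_{\al_m}(z_j)}\bigr)$, using a partition of unity or spectral projections of $\rho$. Each term is then a Schur product of two positive kernels.

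\textbf{What each route buys.} Yours is shorter, avoids the functional calculus and both limiting arguments, and gives $(3)\Rightarrow(2)$ directly. The paper's needs only the definition of $SA_\Pe$ and the kernel-function model. Yours instead rests on the hard direction $(1)\Rightarrow(3)$ of the vector-valued realization theorem. The paper only states that direction, and its proof runs through the same RKHS argument together with Theorem \ref{thm_sa_P}.

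\textbf{A tidy-up in the last step.} You do not need a contractive extension followed by a dilation. The claim that a dilation ``preserves the form of the realization'' would itself need proof, because any added state space must carry a structured block of $E(z)$. Instead, add an infinite-dimensional summand to the third block of the state space, on which $E(z)$ acts as $(z^{(2)}/2)I$. The orthocomplements of the domain and range of $V$ then both contain a copy of this summand, so they are infinite-dimensional and, by separability, equidimensional. Hence $V$ extends directly to a unitary, as in the paper. The identity $\Upsilon_{32}(w)^*=\Upsilon_{31}(w)^*\Upsilon_{12}(w)^*$ uses $V$ only on its original domain, so any such unitary extension works.
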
	

\begin{proof}	$(1) \implies (2)$. Let $k: \Pe \times \Pe \to \mathcal{B}(\LS_2)$ be an admissible kernel, and let $F=\{z_1, \dotsc, z_n\}$ be a subset of $\Pe$. For $w \in \Pe$ and $h \in \LS_2$, we define $k_wh: \Pe \to \LS_2$ as $k_wh(z)=k(z, w)h$. Let 
\[
\HS(k)=\overline{\text{span}}\left\{k_{z}h: z \in \Pe, h \in \LS_2\right\},
\]
which is the reproducing kernel Hilbert space associated with the operator-valued kernel $k$, and is equipped with the inner product $\la k_zh, k_w h'\ra_{\HS(k)}=\la k(w, z)h, h'\ra_{\LS_2}$. Consider the commuting triple $\underline{M}=(M_{z^{(1)}}, M_{z^{(2)}}, M_{z^{(3)}})$ consisting of the coordinate multiplication operators on $\HS(k)$. It is not difficult to see that  $M_{z^{(i)}}^*(k_zh)=k_z(\overline{z}^{(i)}h)$ for $ 1 \leq i \leq 3$. For every $z, w \in \Pe$ and $h, h' \in \LS_2$, we have
{\small 
\begin{enumerate}[leftmargin=*]
	\item[(i)] $\left\la \left(I-\frac{M_{z^{(2)}}M_{z^{(2)}}^*}{4}\right)k_zh, k_wh'\right\ra
	=\la k(w, z)h, h'\ra -\frac{1}{4}\left\la k_z(\overline{z}^{(2)}h), k_w(\overline{w}^{(2)}h') \right \ra 
	=\left(1-\frac{\overline{z}^{(2)}w^{(2)}}{4}\right)\la k(w, z)h, h'\ra$, \smallskip 
	
	\item[(ii)] $\left\la \left(I-\Phi_\al(M_{z^{(2)}}, M_{z^{(3)}})\Phi_\al(M_{z^{(2)}}, M_{z^{(3)}})^*\right)k_zh, k_wh'\right\ra=\left(1-\overline{\Phi_\al(z^{(2)}, z^{(3)})}\Phi_\al(w^{(2)}, w^{(3)})  \right)\la k(w, z)h, h'\ra$, \smallskip 
	
	 \item[(iii)] $\left\la \left(I-\Psi_\al(\underline{M})\Psi_\al(\underline{M})^*\right)k_zh, k_wh'\right\ra=\left(1-\overline{\psi_\al(z^{(1)}, z^{(2)}, z^{(3)})}\psi_\al(w^{(1)}, w^{(2)}, w^{(3)})  \right)\la k(w, z)h, h'\ra$.
\end{enumerate}}
\noindent Since $k$ is an admissible kernel, it now follows that for every $\al \in \DC$,
\begin{align}\label{eqn_201}
\|M_{z^{(2)}}\| \leq 2, \quad \|\Phi_\al(M_{z^{(2)}}, M_{z^{(3)}})\| \leq 1 \quad \text{and} \quad \|\psi_\al(M_{z^{(1)}}, M_{z^{(2)}}, M_{z^{(3)}})\| \leq 1
\end{align}
Associated with the set $F=\{z_1, \dotsc, z_n\}$, we consider $\HS_n(k)=\overline{\text{span}}\{k_{z_i}h: h \in \LS_2, 1 \leq i \leq n\}$, which is a subspace of $\HS(k)$. Since $M_{z^{(i)}}^*(k_{z_j}h)=k_{z_j}(\overline{z}_j^{(i)}h)$ for $1 \leq i \leq 3$ and $1 \leq j \leq n$, it follows that $\HS_n(k)$ is a joint invariant subspace for $\underline{M}^*=(M_{z^{(1)}}^*, M_{z^{(2)}}^*, M_{z^{(3)}}^*)$. Consider the operator triple $\underline{T}=(T_1, T_2, T_3)$ on $\HS_n(k)$ given by
\[
(T_1^*, T_2^*, T_3^*)=\left(M_{z^{(1)}}^*|_{\HS_n(k)}, M_{z^{(2)}}^*|_{\HS_n(k)}, M_{z^{(3)}}^*|_{\HS_n(k)}\right).
\]
We have by \eqref{eqn_201} that $\|T_2^*\| \leq 2, \|\Phi_\al(T_2^*, T_3^*)\| \leq 1$ and $\|\psi_\al(T_1^*, T_2^*, T_3^*)\| \leq 1$ for all $\al \in \DC$. Evidently $\Pe$ is a $(1, 1, 2)$-quasi-balanced, that is, $s\cdot z=(sz^{(1)}, sz^{(2)}, s^2z^{(3)}) \in \Pe$ for $0 \leq s \leq 1$ and $z=(z^{(1)}, z^{(2)}, z^{(3)}) \in \Pe$. Consequently, $\|sT_2^*\| < 2, \|\Phi_\al(sT_2^*, s^2T_3^*)\| < 1$ and $\|\psi_\al(sT_1^*, sT_2^*, s^2T_3^*)\| < 1$ for all $\al \in \DC$ and $s \in (0, 1)$. Thus, $s\cdot\underline{T}^*=(sT_1^*, sT_2^*, s^2T_3^*) \in \mathfrak{M}_\Pe$ for every $s \in (0, 1)$. Let $f: \Pbar \to \mathcal{B}(\LS_3, \LS_1)$ be a holomorphic map such that $f \in SA_\Pe(\LS_3, \LS_1)$. Consider the holomorphic function $\widehat{f}: \Pbar \to \mathcal{B}(\LS_1, \LS_3)$ defined as $\widehat{f}(z)=f(\overline{z})^*$ for $z \in \Pbar$. It is not difficult to see that $\widehat{f} \in SA_\Pe(\LS_1, \LS_3)$. By analytic functional calculus, it follows that $\widehat{f}(s\cdot \underline{T}^*) \in \mathcal{B}(\HS_n(k)\otimes \LS_1, \HS_n(k)\otimes \LS_3)$. For $u_i, u_j \in \LS_2, v_i \in \LS_3$ and $v_j \in \LS_1$ with $ 1\leq i, j \leq n$,   
\begin{align*}
	\la \widehat{f}(s\cdot \underline{T}^*)(k_{z_j}u_j\otimes v_j), k_{z_i}u_i\otimes v_i \ra 	
	&=\la \widehat{f}_{v_j, v_i}(s\cdot \underline{T}^*)k_{z_j}u_j, k_{z_i}u_i \ra \\
	&=\la k_{z_j}(\widehat{f}_{v_j, v_i}(s\cdot \overline{z}_j)u_j), k_{z_i}u_i\ra \quad [M_{z^{(i)}}^*(k_{z_j}h)=k_{z_j}(\overline{z}_j^{(i)}h) \ \text{for} \ 1 \leq i \leq 3]\\
	&=  \la k(z_i, z_j)(\widehat{f}_{v_j, v_i}(s\cdot \overline{z}_j)u_j), u_i\ra \\
	&=\widehat{f}_{v_j, v_i}(s\cdot \overline{z}_j) \la k(z_i, z_j)u_j, u_i\ra \\
	&=\la \widehat{f}(s\cdot \overline{z}_j)v_j, v_i \ra \la k(z_i, z_j)u_j, u_i\ra\\
	&=\la f(s\cdot z_j)^*v_j, v_i \ra \la k(z_i, z_j)u_j, u_i\ra\\
	&= \la k_{z_j}u_j \otimes f(s\cdot \overline{z}_j)^*v_j, k_{z_i}u_i \otimes v_i \ra
\end{align*}
and thus, we have for every $u \in \LS_2, v \in \LS_1$ and $1 \leq j \leq n$ that
\begin{align}\label{eqn_202}
\widehat{f}(s\cdot \underline{T}^*)(k_{z_j}u\otimes v)=k_{z_j}u\otimes f(s\cdot z_j)^*v.
\end{align} 
Since $\widehat{f} \in SA_\Pe(\LS_1, \LS_3)$ and $s\cdot \underline{T}^* \in \mathfrak{M}_\Pe$, it follows that $\|\widehat{f}(s\cdot \underline{T}^*)\| \leq 1$. For $\{u_i, v_i : 1 \leq i \leq n\} \subset \LS_2$,

\begin{align*}
0 & \leq \left\la \left(I_{\HS_n(k)\otimes \HS_1}-\widehat{f}(s\cdot \underline{T}^*)^*\widehat{f}(s\cdot \underline{T}^*)\right) \left[\overset{n}{\underset{j=1}{\sum}}k_{z_j}u_j\otimes \Upsilon_{12}(z_j)^*v_j  \right], \left[\overset{n}{\underset{i=1}{\sum}}k_{z_i}u_i\otimes \Upsilon_{12}(z_i)^*v_i  \right]\right\ra \\
&= \overset{n}{\underset{i, j=1}{\sum}}\left \la k_{z_j}u_j\otimes \Upsilon_{12}(z_j)^*v_j, k_{z_i}u_i\otimes \Upsilon_{12}(z_i)^*v_i \right \ra \\
& \quad -
 \overset{n}{\underset{i, j=1}{\sum}}\left \la \widehat{f}(s\cdot \underline{T}^*)\left[k_{z_j}u_j\otimes \Upsilon_{12}(z_j)^*v_j\right], \widehat{f}(s\cdot \underline{T}^*)\left[k_{z_i}u_i\otimes \Upsilon_{12}(z_i)^*v_i\right] \right \ra \\
&= \overset{n}{\underset{i, j=1}{\sum}}\left \la k_{z_j}u_j\otimes \Upsilon_{12}(z_j)^*v_j, k_{z_i}u_i\otimes \Upsilon_{12}(z_i)^*v_i \right \ra \\
& \quad   -
\overset{n}{\underset{i, j=1}{\sum}} \left \la k_{z_j}u_j\otimes f(s\cdot z_j)^*\Upsilon_{12}(z_j)^*v_j, k_{z_i}u_i\otimes f(s\cdot z_i)^*\Upsilon_{12}(z_i)^*v_i \right \ra \\
&= \overset{n}{\underset{i, j=1}{\sum}}\left \la k(z_i, z_j)u_j, u_i \right \ra \left \la \Upsilon_{12}(z_i)\Upsilon_{12}(z_j)^*v_j, v_i \right \ra \\
& \quad -
\overset{n}{\underset{i, j=1}{\sum}} \left \la k(z_i, z_j)u_j, u_i \right \ra \left \la f(s\cdot z_j)^*\Upsilon_{12}(z_j)^*v_j, f(s\cdot z_i)^*\Upsilon_{12}(z_i)^*v_i \right \ra \\
&= \overset{n}{\underset{i, j=1}{\sum}} \left \la k(z_i, z_j)u_j, u_i \right \ra
 \left \la \left(\Upsilon_{12}(z_i)\Upsilon_{12}(z_j)^*-\Upsilon_{12}(z_i)f(s\cdot z_i) f(s\cdot z_j)^*\Upsilon_{12}(z_j)^*\right)v_j,v_i \right \ra
\end{align*}
and taking the limit $s \to 1^-$, we have that
\begin{align}\label{eqn_203}
\overset{n}{\underset{i, j=1}{\sum}} \left \la k(z_i, z_j)u_j, u_i \right \ra
\left \la \left(\Upsilon_{12}(z_i)\Upsilon_{12}(z_j)^*-\Upsilon_{12}(z_i)f(z_i) f(z_j)^*\Upsilon_{12}(z_j)^*\right)v_j,v_i \right \ra \geq 0.
\end{align}
For $\Upsilon_{31} \in SA_\Pe(\LS_3, \LS_1)$ and $0<r<1$, we define $\Upsilon_{31}^{(r)}: \Pbar \to \mathcal{B}(\LS_3, \LS_1)$ as $\Upsilon_{31}^{(r)}(z)=\Upsilon_{31}(r\cdot z)$. Evidently, $r\cdot z \in \Pe$ for every $z \in \Pbar$ and $r \in (0, 1)$. Thus, $\Upsilon_{31}^{(r)}$ is a well-defined holomorphic map on $\Pbar$ that belongs to $SA_\Pe(\LS_3, \LS_1)$. By \eqref{eqn_203}, it follows that
\begin{align*}
	\overset{n}{\underset{i, j=1}{\sum}} \left \la k(z_i, z_j)u_j, u_i \right \ra
	\left \la \left(\Upsilon_{12}(z_i)\Upsilon_{12}(z_j)^*-\Upsilon_{12}(z_i)\Upsilon_{31}^{(r)}(z_i) \Upsilon_{31}^{(r)}(z_j)^*\Upsilon_{12}(z_j)^*\right)v_j,v_i \right \ra \geq 0
\end{align*}
for every $r \in (0, 1)$. Letting $r \to 1$, we have that
\begin{align*}
	\overset{n}{\underset{i, j=1}{\sum}} \left \la k(z_i, z_j)u_j, u_i \right \ra
	\left \la \left(\Upsilon_{12}(z_i)\Upsilon_{12}(z_j)^*-\Upsilon_{12}(z_i)\Upsilon_{31}(z_i) \Upsilon_{31}(z_j)^*\Upsilon_{12}(z_j)^*\right)v_j,v_i \right \ra \geq 0
\end{align*}
and so, $\begin{bmatrix}
	\left(\Upsilon_{12}(z_i)\Upsilon_{12}(z_j)^*-\Upsilon_{32}(z_i)\Upsilon_{32}(z_j)^*\right)\otimes k(z_i, z_j)
\end{bmatrix} \geq 0$.

\medskip

\noindent	$(2) \implies (3)$. Consider the map 
\[
g: \Pe \times \Pe \to \mathcal{B}(\LS_2) \quad \text{defined as} \quad g(z, w)=\Upsilon_{12}(z)\Upsilon_{12}(w)^*-\Upsilon_{32}(z)\Upsilon_{32}(w)^*.
\]
Evidently, $g$ is a self-adjoint function such that $g \oslash k$ is positive semi-definite for any $\mathcal{B}(\LS_2)$ admissible kernel $k$. The desired conclusion now follows from Theorem \ref{thm_sa_P}.

\medskip

\noindent	$(3) \implies (1)$. Suppose there exist completely positive kernels $\xi, \nabla: \Pe \times \Pe \to \mathcal{B}(C(\DC), \mathcal{B}(\LS_2))$ and a weak kernel $\delta: \Pe \times \Pe \to \mathcal{B}(\LS_2)$ such that $\Upsilon_{12}(z)\Upsilon_{12}(w)^*-\Upsilon_{32}(z)\Upsilon_{32}(w)^*
$ equals
\begin{align}\label{eqn_2004}
\xi(z, w)(1-\mathrm{J}(z)\overline{\mathrm{J}(w)})+\nabla(z, w)(1-\mathrm{j}(z)\overline{\mathrm{j}(w)})  +\left(1-\frac{z^{(2)}\overline{w}^{(2)}}{4}\right)\delta(z, w).
\end{align}
By Proposition \ref{prop_prelim_I_P}, there exist Hilbert space $\KS_1, \KS_2$ and maps $L_i: \Pe \to \mathcal{B}(C(\DC), \mathcal{B}(\KS_i, \LS_2))$ for $i=1, 2$ such that for every $z, w \in \Pe$ and $f, h \in C(\DC)$,
\begin{align}\label{eqn_2005}
	\xi(z, w)(f\overline{h})=L_1(z)(f)(L_1(w)(h))^* \quad  \text{and} \quad \nabla(z, w)(f\overline{h})=L_2(z)(f)(L_2(w)(h))^*.
	\end{align} 
Also, there exist unital $*$-representations $\rho_i: C(\DC)\to \mathcal{B}(\KS_i)$ for $i=1, 2$ such that for every $z \in \Pe$ and $f, h \in C(\DC)$, we have 
\begin{align}
(L_1(z)(fh))^*=\rho_1(f)^*(L_1(z)(h))^* \quad \text{and} \quad (L_2(z)(fh))^*=\rho_2(f)^*(L_2(z)(h))^*.
\end{align}
We have by Theorem 2.62 in \cite{Agler_McCarthy} that there exist a Hilbert space $\KS_3$ and a map $G: \Pe \to \mathcal{B}(\KS_3, \LS_2)$ such that $\delta(z, w)=G(z)G(w)^*$. Set $\KS=\KS_1 \oplus \KS_2 \oplus \KS_3$. Using \eqref{eqn_2005}, we can re-write \eqref{eqn_2004} as 
\begin{align*}
&\Upsilon_{12}(z)\Upsilon_{12}(w)^*+L_1(z)(\mathrm{J}(z))(L_1(w)(\mathrm{J}(w)))^*+L_2(z)(\mathrm{j}(z))(L_2(w)(\mathrm{j}(w)))^*+\frac{z^{(2)}\overline{w}^{(2)}}{4}G(z)G(w)^*
\\
&=
\Upsilon_{32}(z)\Upsilon_{32}(w)^*+L_1(z)(1)(L_1(w)(1))^*+L_2(z)(1)(L_2(w)(1))^*+G(z)G(w)^*
\end{align*}
for every $z, w \in \Pe$. Consequently, we have for every $z, w \in \Pe$ and $v \in \LS_2$ that
{\small
	\setlength{\arraycolsep}{3pt}
\begin{align}\label{eqn_RP_004}
	& \left\langle \begin{bmatrix}
		\Upsilon_{12}(z)^*v \\
		Y(z)^*
		\begin{bmatrix}
			(L_1(z)(1))^*v \\
			(L_2(z)(1))^*v \\
			G(z)^*v
		\end{bmatrix}
	\end{bmatrix}, \begin{bmatrix}
	\Upsilon_{12}(w)^*v \\
	Y(w)^*
	\begin{bmatrix}
		(L_1(w)(1))^*v \\
	(L_2(w)(1))^*v \\
	G(w)^*v
	\end{bmatrix}
	\end{bmatrix} \right\rangle =\left\langle \begin{bmatrix}
	\Upsilon_{32}(z)^*v \\
		(L_1(z)(1))^*v \\
		(L_2(z)(1))^*v \\
		G(z)^*v
	\end{bmatrix}, \begin{bmatrix}
	\Upsilon_{32}(w)^*v \\
		(L_1(w)(1))^*v \\
		(L_2(w)(1))^*v \\
		G(w)^*v
	\end{bmatrix} \right\rangle, 
\end{align}}
where $Y(z)=\begin{bmatrix} \rho_1(\mathrm{J}(z)) & 0 & 0\\ 
	0 & \rho_2(\mathrm{j}(z)) & 0\\
	0 & 0 & (z^{(2)}\slash 2)I_{\KS_3}
\end{bmatrix}$. Consider the spaces given by
{\small 
	\setlength{\arraycolsep}{3pt}
	\begin{align*}
	\KS_d
	= \overline{\text{span}} \left\{
	\begin{bmatrix}
		\Upsilon_{12}(z)^*v \\
		Y(z)^*
		\begin{bmatrix}
			(L_1(z)(1))^*v \\
			(L_2(z)(1))^*v \\
			G(z)^*v
		\end{bmatrix}
	\end{bmatrix}
	: z \in \Pe, v \in \LS_2
	\right\} \ \text{and} \ \KS_r
	= \overline{\text{span}} \left\{
	\begin{bmatrix}
		\Upsilon_{32}(z)^*v \\
		(L_1(z)(1))^*v \\
		(L_2(z)(1))^*v \\
		G(z)^*v
	\end{bmatrix}
	: z \in \Pe, v \in \LS_2
	\right\},
\end{align*}
}
which are subspaces of $\LS_1 \oplus \KS$ and $\LS_3 \oplus \KS$, respectively.
It follows from \eqref{eqn_RP_004} that the linear operator $V: \KS_d \to \KS_r$ defined by
\[
V\begin{bmatrix}
	\Upsilon_{12}(z)^*v \\
	Y(z)^*
	\begin{bmatrix}
		(L_1(z)(1))^*v \\
		(L_2(z)(1))^*v \\
		G(z)^*v
	\end{bmatrix}
\end{bmatrix}
=\begin{bmatrix}
	\Upsilon_{32}(z)^*v \\
	(L_1(z)(1))^*v \\
	(L_2(z)(1))^*v \\
	G(z)^*v
\end{bmatrix}
\]
is an isometry. Adding an infinite-dimensional summand to $\KS_d$, if necessary, $V$ extends to a unitary from $\LS_1 \oplus \KS$ onto $\LS_3 \oplus \KS$. In $2 \times 2$ block form, we can write $V=\begin{bmatrix} A_1 & B_1 \\ C_1 & D_1 \end{bmatrix}$. Let $v \in \LS_2$. By definition of $V$, it then follows that 
{\small 
\begin{align}\label{eqn_RP_005}
	 A_1\Upsilon_{12}(z)^*v+B_1Y(z)^*\begin{bmatrix}
	 	(L_1(z)(1))^*v \\
	 	(L_2(z)(1))^*v \\
	 	G(z)^*v
	 \end{bmatrix}&=\Upsilon_{32}(z)^*v \quad \text{and} 
	\notag \\ 
	 C_1\Upsilon_{12}(z)^*v+D_1Y(z)^*\begin{bmatrix}
	(L_1(z)(1))^*v \\
	(L_2(z)(1))^*v \\
	G(z)^*v
	\end{bmatrix}&=\begin{bmatrix}
	(L_1(z)(1))^*v \\
	(L_2(z)(1))^*v \\
	G(z)^*v
	\end{bmatrix}.
\end{align}
}
Since $\|Y(z)\|<1$ and $\|D\| \leq 1$, we have by \eqref{eqn_RP_005} that $\begin{bmatrix}
	(L_1(z)(1))^*v \\
	(L_2(z)(1))^*v \\
	G(z)^*v
\end{bmatrix}=(I-D_1Y(z)^*)^{-1}C_1\Upsilon_{12}(z)^*v$. Thus, $\Upsilon_{32}(z)^*v=A_1\Upsilon_{12}(z)^*v+B_1Y(z)^*(I-D_1Y(z)^*)^{-1}C_1\Upsilon_{12}(z)^*v$ for $z \in \Pe$ and $v \in \LS_2$. Define   
\[
\Upsilon_{31}: \Pe \to \mathcal{B}(\LS_3, \LS_1) \quad \text{as} \quad \Upsilon_{31}(z)^*=A_1+B_1Y(z)^*(I-D_1Y(z)^*)^{-1}C_1.
\]
It is not difficult to see that $\Upsilon_{12}(z)\Upsilon_{31}(z)=\Upsilon_{32}(z)$ for every $z \in \Pe$, and that 
\[
\Upsilon_{31}(z)=A_1^*+C_1^*Y(z)(I-D_1^*Y(z))^{-1}B_1^*=A+BY(z)(I-DY(z))^{-1}C,
\]
where $U=\begin{bmatrix}
	A & B \\ 
	C & D
\end{bmatrix}=\begin{bmatrix}
A_1^* & C_1^*\\
B_1^* & D_1^*
\end{bmatrix}=V^*$ is a unitary from $\LS_3 \oplus \KS$ onto $\LS_1 \oplus \KS$. By Theorem \ref{thm_realization_P}, $\Upsilon_{31} \in SA_{\Pe}(\LS_3, \LS_1)$, which completes the proof. 
\end{proof}
	
Now we are in a position of presenting the desired Toeplitz corona theorem on $\Pe$, which is the main result of this paper.
	
\begin{thm}\label{thm_TC_P_II}
	Let $\varphi_1, \dotsc, \varphi_n \in H^\infty(\Pe)$ and let $\epsilon>0$. Then the following statements are equivalent:
	\begin{enumerate}[leftmargin=*]
		\item[$(1)$] there exist $f_1, \dotsc, f_n \in H^\infty(\Pe)$ such that  
		\[
		\begin{bmatrix}
			 f_1 & \dotsc & f_n
		\end{bmatrix}^t \in \frac{1}{\epsilon}SA_\Pe(\C, \C^n), \ \text{i.e.,} \ \left\|\begin{bmatrix}
		f_1(\underline{T}) \\
		\vdots \\
		f_n(\underline{T})
		\end{bmatrix} \right\| \leq \frac{1}{\epsilon} \ \text{for every} \ \underline{T} \in \mathfrak{M}_\Pe
		\] 
		and $\varphi_1(z)f_1(z)+ \dotsc + \varphi_n(z)f_n(z)=1$ for every $z \in \Pe$;
		
		\item[$(2)$] the map 
		$
		(z, w) \mapsto \left(\overset{n}{\underset{j=1}{\sum}} \varphi_j(z)\overline{\varphi_j(w)}-\epsilon^2\right)k(z, w)
		$ on $\Pe \times \Pe$	is positive semi-definite for every $k \in AK(\Pe)$;
		
		\item[$(3)$] there exist positive kernels $\xi, \nabla: \Pe \times \Pe \to C(\DC)^*$ and a weak kernel $\delta: \Pe \times \Pe \to \C$ such that
	{\small	\begin{align*}
			\overset{n}{\underset{j=1}{\sum}} \varphi_j(z)\overline{\varphi_j(w)}-\epsilon^2
			=\xi(z, w)(1-\mathrm{J}(z)\overline{\mathrm{J}(w)})+\nabla(z, w)(1-\mathrm{j}(z)\overline{\mathrm{j}(w)})+\left(1-\frac{z^{(2)}\overline{w}^{(2)}}{4}\right)\delta(z, w)
		\end{align*}
	}
	for all $z, w \in \Pe$, where the maps $z\mapsto \mathrm{J}(z)$ and $z\mapsto \mathrm{j}(z)$ are as in \eqref{eqn_J(z)}.	 	
	\end{enumerate}
\end{thm}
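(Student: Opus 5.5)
The plan is to specialize Theorem \ref{thm_TC_P} with $\LS_1=\C^n$, $\LS_2=\LS_3=\C$. We take
\[
\Upsilon_{12}(z)=\begin{bmatrix}\varphi_1(z) & \dotsc & \varphi_n(z)\end{bmatrix}\in\mathcal{B}(\C^n,\C), \qquad \Upsilon_{32}(z)=\epsilon\in\mathcal{B}(\C,\C).
\]
With these choices,
\[
\Upsilon_{12}(z)\Upsilon_{12}(w)^*-\Upsilon_{32}(z)\Upsilon_{32}(w)^*=\sum_{j=1}^n\varphi_j(z)\overline{\varphi_j(w)}-\epsilon^2 .
\]
Then statements (2) and (3) of Theorem \ref{thm_TC_P} should become exactly statements (2) and (3) here. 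The reason is that with $\LS_2=\C$, a $\mathcal{B}(\C)$-valued admissible kernel is just an element of $AK(\Pe)$, and the tensor product $g\otimes k$ is scalar multiplication. Likewise, a completely positive kernel into $\mathcal{B}(C(\DC),\mathcal{B}(\C))$ is a positive kernel into $C(\DC)^*$, and a $\mathcal{B}(\C)$-valued weak kernel is a scalar weak kernel. So (2)$\iff$(3) follows directly from Theorem \ref{thm_TC_P}, via the chain (2)$\Rightarrow$(3)$\Rightarrow$(1)$\Rightarrow$(2) of that theorem.

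It remains to match statement (1) here with statement (1) of Theorem \ref{thm_TC_P}. Suppose $\Upsilon_{31}\in SA_\Pe(\C,\C^n)$ satisfies $\Upsilon_{12}\Upsilon_{31}=\epsilon$. Write $\Upsilon_{31}=[g_1,\dotsc,g_n]^t$ and set $f_j=g_j/\epsilon$. Then $\sum_j\varphi_jf_j=1$ and $[f_1,\dotsc,f_n]^t\in\frac1\epsilon SA_\Pe(\C,\C^n)$. Conversely, given such $f_j$, the function $\Upsilon_{31}=\epsilon[f_1,\dotsc,f_n]^t$ lies in $SA_\Pe(\C,\C^n)$ and $\Upsilon_{12}\Upsilon_{31}=\epsilon=\Upsilon_{32}$.

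We also need $f_j\in H^\infty(\Pe)$. Evaluating at the tuple $\underline{T}=(z^{(1)},z^{(2)},z^{(3)})$ of scalar multiples of the identity, which lies in $\mathfrak{M}_\Pe$ for every $z\in\Pe$ by \eqref{eqn_P}, gives $|f_j(z)|\le \|\Upsilon_{31}(z)\|/\epsilon\le 1/\epsilon$. The functional calculus for functions valued in $\mathcal{B}(\C,\C^n)$ agrees with the column $[f_j(\underline{T})]$, as discussed before Lemma \ref{lem_201}, so the norm condition transfers correctly.

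The main subtlety is a set of bookkeeping checks rather than a substantive obstacle. First, the hypotheses $\varphi_j\in H^\infty(\Pe)$ make $\Upsilon_{12}$ holomorphic, as Theorem \ref{thm_TC_P} implicitly requires. Second, the scalar specialization of the admissibility conditions and of the kernels $\xi,\nabla,\delta$ is exactly the scalar notion used in (2) and (3). Third, the scaling by $\epsilon$ is carried through consistently.
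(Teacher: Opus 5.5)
Your proposal is correct and is the paper's proof. The paper also specializes Theorem \ref{thm_TC_P} with $\LS_1=\C^n$, $\LS_2=\LS_3=\C$, $\Upsilon_{12}=[\varphi_1\ \cdots\ \varphi_n]$, $\Upsilon_{32}=\epsilon$ and $\Upsilon_{31}=\epsilon[f_1\ \cdots\ f_n]^t$. Your extra checks only make explicit what the paper leaves unstated: that scalar admissible, positive and weak kernels are the $\LS_2=\C$ cases, and that $f_j\in H^\infty(\Pe)$ follows by evaluating at scalar tuples in $\mathfrak{M}_\Pe$.
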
	

\begin{proof}
	The proof follows from Theorem \ref{thm_TC_P} by choosing the spaces $\LS_1=\C^n, \LS_2=\LS_3=\C$, and defining $\Upsilon_{32}(z)=\epsilon, \Upsilon_{12}(z)=\begin{bmatrix}
		\varphi_1(z) & \dotsc & \varphi_n(z)
	\end{bmatrix}$ and $\Upsilon_{31}(z)=\epsilon \begin{bmatrix}
	 f_1(z) & \dotsc & f_n(z)
	\end{bmatrix}^t$ for $z \in \Pe$.
\end{proof}

The authors of \cite{Pal2026} established realization, interpolation, and extension theorems on $\Pe$, and consequently  recovered the corresponding results on the bidisc $\D^2$ and the symmetrized bidisc $\G_2$. In a similar spirit, as applications of Theorems \ref{thm_TC_P} and \ref{thm_TC_P_II}, we obtain alternative characterizations for the Toeplitz corona theorems on $\D^2$ and $\G_2$. This illustrates that the function theories on $\D^2$ and $\G_2$ can be studied within a unified framework through the function theory on $\Pe$.

\subsection{The bidisc case.} The Toeplitz corona theorem on the bidisc \cite{Agler_McCarthyI} asks the following: given $\varphi_1, \dotsc, \varphi_n$ in $H^\infty(\D^2)$ and $\epsilon>0$, when does there exist functions $f_1, \dotsc, f_n$ in $H^\infty(\D^2)$ such that  
\[
\sup_{z\in \D^2} \left[|f_1(z)|^2+\dotsc+|f_n(z)|^2 \right]\leq \frac{1}{\epsilon} \  \ \text{and} \ \ \varphi_1f_1+ \dotsc + \varphi_nf_n=1?
\] 
The authors of \cite{Agler_McCarthyI} provided necessary and sufficient conditions for the existence of such $f_1, \dotsc, f_n$ in $H^\infty(\D^2)$. We present alternative characterizations for the Toeplitz corona theorem on $\D^2$ in terms of admissible kernels on $\Pe$. We have by Theorem \ref{thm_connect_P} that $(z^{(1)}, 0, z^{(3)}) \in \Pe$ if and only if $(z^{(1)}, z^{(3)}) \in \D^2$. Moreover, if $(z^{(1)}, z^{(2)}, z^{(3)})\in \Pe$, then $(z^{(1)}, z^{(3)}) \in \D^2$. Thus, a holomorphic map $g: \D^2 \to \C$ induces a holomorphic map $g\circ \theta_{\Pe \to \D^2}$ on $\Pe$, where 
\[
\theta_{\Pe \to \D^2}: \Pe \to \D^2, (z^{(1)}, z^{(2)}, z^{(3)}) \mapsto (z^{(1)}, z^{(3)}).
\]
This gives a way through which we transfer the Toeplitz corona theorem on $\D^2$ to the corresponding result on $\Pe$. Before going further, we briefly discuss Ando's inequality \cite{Ando} for operator-valued holomorphic functions on $\D^2$. For Hilbert spaces $\LS, \LS'$, we denote by $H^\infty(\D^2, \mathcal{B}(\LS, \LS'))$ the space of all bounded $\mathcal{B}(\LS, \LS')$-valued holomorphic functions on $\D^2$. The success of unitary dilation for a commuting pair of contractions $(T_1, T_2)$ yields that $\DC^2$ is a complete spectral set for $(T_1, T_2)$, i.e,
\begin{align}\label{eqn_Ando}
	\left \|[f_{ij}(T_1, T_2)]_{i, j=1}^n\right\| \leq \sup\left\{\left\|[f_{ij}(z)]_{i, j=1}^n\right\| : z \in \D^2  \right\}
\end{align}
for all matricial functions $[f_{ij}]_{n \times n}$ with each $f_{ij}$ in the rational algebra $\text{Rat}(\DC^2)$. Ando's inequality as in \eqref{eqn_Ando} can be extended to operator-valued holomorphic maps on $\D^2$. We briefly explain it here. Let $f \in H^\infty(\D^2, \mathcal{B}(\LS_1, \LS_2))$ and let $(T_1, T_2)$ be a commuting pair of contractions. For $0< r<1$, the map $f_r: \DC^2 \to \mathcal{B}(\LS_1, \LS_2)$ defined as $f_r(z^{(1)}, z^{(2)})=f(rz^{(1)}, rz^{(2)})$ is holomorphic. Consider orthonormal sets $\{e_1, \dotsc, e_m\}$ and $\{e_1', \dotsc, e_n'\}$ in $\LS_1$ and $\LS_2$, respectively. Define $F_r: \DC^2 \to M_{n \times m}(\C)$ as $\displaystyle F_r(z)=[\la f_r(z)e_i, e_j'\ra ]_{j, i}$. Let $z \in \DC^2$. Then
{\small 
	\begin{align*}
		\left|\la F_r(z)(h_1,\dotsc,h_m),(h'_1,\dotsc,h'_n)\ra\right| 
		=\left|	\sum_{i=1}^m\sum_{j=1}^n
		\la f_r(z)(h_i\otimes e_i),h'_j\otimes e'_j\ra \right| 
		&=
		\left|\left\la
		f_r(z)
		\left(
		\sum_{i=1}^m h_i\otimes e_i
		\right),
		\sum_{j=1}^n h'_j\otimes e'_j
		\right\ra\right|\\
	& \leq \|f_r\|_{\infty, \DC^2}
	\left\|
	\sum_{i=1}^m h_i\otimes e_i
	\right\| 
	\left\|
	\sum_{j=1}^n h'_j\otimes e'_j
	\right\|\\
	&= \|f_r\|_{\infty, \DC^2}
	\|(h_1,\dotsc,h_m)\|
	\,
	\|(h'_1,\dotsc,h'_n)\|
\end{align*}
}
and so, $\|F_r\|_{\infty, \DC^2} \leq \|f_r\|_{\infty, \DC^2}$. Since $\DC^2$ is a complete spectral set for $(T_1, T_2), \|F_r(T_1, T_2)\| \leq \|F_r\|_{\infty, \DC^2} \leq \|f_r\|_{\infty, \DC^2}$. Following the arguments as in the proof of Lemma \ref{lem_201}, we have that $\|f_r(T_1, T_2)\| \leq \|f_r\|_{\infty, \DC^2} \leq \|f\|_{\infty, \D^2}$. Taking $r\to 1$, $\|f(T_1, T_2)\| \leq \|f\|_{\infty, \D^2}$ and so, \eqref{eqn_Ando} holds for bounded operator-valued holomorphic maps on $\D^2$. Next, we have the following result.

\begin{thm}\label{thm_TC_D2}
	Let $\LS_1, \LS_2$ and $\LS_3$ be Hilbert spaces. For functions $\Upsilon_{12}: \D^2 \to \mathcal{B}(\LS_1, \LS_2)$ and $\Upsilon_{32}: \D^2 \to \mathcal{B}(\LS_3, \LS_2)$, the following are equivalent: 
	\begin{enumerate}[leftmargin=*]
		\item[$(1)$] there exists $\Upsilon_{31} \in H^\infty(\D^2, \mathcal{B}(\LS_3, \LS_1))$ with $\|\Upsilon_{31}\|_{\infty, \D^2} \leq 1$ such that $\Upsilon_{12}(z)\Upsilon_{31}(z)=\Upsilon_{32}(z)$ for every $z \in \D^2$;
		
		\item[$(2)$] for every subset $F=\{z_1, \dotsc, z_n\} \subset \Pe$ and 		for every $\mathcal{B}(\LS_2)$-valued admissible kernel $k$ on $\Pe$,
		\[
		\begin{bmatrix}
			\left(\Upsilon_{12}\left(z_i^{(1)}, z_i^{(3)}\right)\Upsilon_{12}\left(z_j^{(1)}, z_j^{(3)}\right)^*-\Upsilon_{32}\left(z_i^{(1)}, z_i^{(3)}\right)\Upsilon_{32}\left(z_j^{(1)}, z_j^{(3)}\right)^*\right)\otimes k(z_i, z_j)
		\end{bmatrix}_{i, j=1}^n \geq 0;
		\]
		
		\item[$(3)$] there exist completely positive kernels $\xi, \nabla: \Pe \times \Pe \to \mathcal{B}(C(\DC), \mathcal{B}(\LS_2))$ and a weak kernel $\delta: \Pe \times \Pe \to \mathcal{B}(\LS_2)$ such that for all $z, w \in \Pe$,
		\begin{align*}
			&\Upsilon_{12}\left(z^{(1)}, z^{(3)}\right)\Upsilon_{12}\left(w^{(1)}, w^{(3)}\right)^*-\Upsilon_{32}\left(z^{(1)}, z^{(3)}\right)\Upsilon_{32}\left(w^{(1)}, w^{(3)}\right)^*\\
			&=\xi(z, w)(1-\mathrm{J}(z)\overline{\mathrm{J}(w)})+\nabla(z, w)(1-\mathrm{j}(z)\overline{\mathrm{j}(w)})+\left(1-\frac{z^{(2)}\overline{w}^{(2)}}{4}\right)\delta(z, w),
		\end{align*}
		where the maps $z\mapsto \mathrm{J}(z)$ and $z\mapsto \mathrm{j}(z)$ are as in \eqref{eqn_J(z)}.	 			
	\end{enumerate}
\end{thm}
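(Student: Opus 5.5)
The plan is to reduce everything to Theorem \ref{thm_TC_P} by composing with $\theta=\theta_{\Pe\to\D^2}$. Define $\widetilde{\Upsilon}_{12}=\Upsilon_{12}\circ\theta$ and $\widetilde{\Upsilon}_{32}=\Upsilon_{32}\circ\theta$ on $\Pe$. Conditions $(2)$ and $(3)$ of the present theorem are then literally conditions $(2)$ and $(3)$ of Theorem \ref{thm_TC_P} for the pair $\widetilde{\Upsilon}_{12},\widetilde{\Upsilon}_{32}$. Hence $(2)\iff(3)$ is immediate, and it suffices to prove that $(1)$ here is equivalent to condition $(1)$ of Theorem \ref{thm_TC_P} for $\widetilde{\Upsilon}_{12},\widetilde{\Upsilon}_{32}$.

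\emph{Direction $(1)\Rightarrow$ Theorem \ref{thm_TC_P}(1).} Given $\Upsilon_{31}\in H^\infty(\D^2,\mathcal{B}(\LS_3,\LS_1))$ with $\|\Upsilon_{31}\|_{\infty,\D^2}\le 1$, set $\widetilde{\Upsilon}_{31}=\Upsilon_{31}\circ\theta$. The factorization identity clearly transfers, and it remains to show $\widetilde{\Upsilon}_{31}\in SA_\Pe(\LS_3,\LS_1)$. For $\underline{T}=(T_1,T_2,T_3)\in\mathfrak{M}_\Pe$ we have $\widetilde{\Upsilon}_{31}(\underline{T})=\Upsilon_{31}(T_1,T_3)$ by the composition property of the Taylor functional calculus, since $\theta$ is linear and $\sigma_T(T_1,T_3)=\theta(\sigma_T(\underline T))\subseteq\D^2$. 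So we need $(T_1,T_3)$ to be a commuting pair of contractions, after which Ando's inequality \eqref{eqn_Ando}, extended to operator-valued $H^\infty$ as above, gives $\|\widetilde\Upsilon_{31}(\underline T)\|\le 1$. To get $\|T_1\|<1$, evaluate $\psi_\alpha$ at $\alpha=0$: $\psi_0(\underline T)=T_1$. To get $\|T_3\|\le1$, use $(T_2,T_3)\in\mathfrak{M}_{\G_2}$: averaging $\Phi_\alpha$ over $\alpha\in\T$, or using the known fact that $\Gamma$-contractions (or this class) have contractive $T_3$, gives $\|T_3\|\le1$. This step, identifying $(T_1,T_3)$ as contractions and justifying the composition of functional calculus, is the main technical point. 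If needed, one can apply the dilation $r\cdot$ trick and $\Upsilon_{31}^{(r)}$ as in the proof of Theorem \ref{thm_TC_P}, so that everything is holomorphic on a neighbourhood of a closed polydisc.

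\emph{Direction Theorem \ref{thm_TC_P}(1) $\Rightarrow(1)$.} Suppose $G\in SA_\Pe(\LS_3,\LS_1)$ with $\widetilde\Upsilon_{12}G=\widetilde\Upsilon_{32}$ on $\Pe$. Define $\Upsilon_{31}(z^{(1)},z^{(3)})=G(z^{(1)},0,z^{(3)})$, which is well defined and holomorphic on $\D^2$ by Theorem \ref{thm_connect_P}. Restricting the identity to points $(z^{(1)},0,z^{(3)})$ gives $\Upsilon_{12}\Upsilon_{31}=\Upsilon_{32}$ on $\D^2$. For the norm bound, fix $z\in\D^2$ and take the scalar triple $\underline T=(z^{(1)},0,z^{(3)})$ acting on $\C$. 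It lies in $\mathfrak{M}_\Pe$ because the defining strict inequalities \eqref{eqn_P} hold for points of $\Pe$; note that $\sup_\alpha$ over the compact set $\DC$ of a continuous function that is $<1$ pointwise is $<1$. Then $\|G(\underline T)\|=\|G(z^{(1)},0,z^{(3)})\|\le1$, so $\|\Upsilon_{31}\|_{\infty,\D^2}\le1$.

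Combining the two directions with Theorem \ref{thm_TC_P} yields $(1)\iff(2)\iff(3)$.
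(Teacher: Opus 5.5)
Your proposal is correct and follows the paper's own proof. You compose with $\theta_{\Pe\to\D^2}$, get $\Upsilon_{31}\circ\theta_{\Pe\to\D^2}\in SA_\Pe(\LS_3,\LS_1)$ from the facts that $(T_1,T_3)$ is a commuting pair of contractions for $\underline{T}\in\mathfrak{M}_\Pe$ and that the operator-valued Ando inequality holds, invoke Theorem \ref{thm_TC_P}, and for the converse restrict the solution to the slice $z^{(2)}=0$.

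The only differences are in detail, where you fill in steps the paper handles by citation or leaves to the reader:
\begin{enumerate}
\item The paper cites an earlier example for the contractivity of $(T_1,T_3)$. You check it directly: $\psi_0(\underline T)=T_1$ gives $\|T_1\|<1$, and the $\Gamma$-contraction property gives $\|T_3\|\le 1$.
\item Your averaging alternative for $\|T_3\|\le 1$ needs a correction. Averaging $\Phi_\alpha$ itself over $\T$ only yields $-T_2/2$. Instead, average the positive operator $(2-\alpha T_2)^*\bigl(I-\Phi_\alpha(T_2,T_3)^*\Phi_\alpha(T_2,T_3)\bigr)(2-\alpha T_2)$ over $\alpha\in\T$. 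This gives $4(I-T_3^*T_3)\ge 0$, hence $\|T_3\|\le 1$.
\item You make explicit the sup-norm bound in the converse by evaluating at scalar triples $(z^{(1)},0,z^{(3)})\in\mathfrak{M}_\Pe$, a step the paper leaves to the reader.
\end{enumerate}
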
	

\begin{proof}
	We prove $(1) \implies (2) \implies (3) \implies (1)$. Given a map $\Upsilon:\D^2 \to \mathcal{B}(\LS, \LS')$, we denote by $\widehat{\Upsilon}: \Pe \to \mathcal{B}(\LS, \LS')$ the map $\widehat{\Upsilon}(z)=\Upsilon\circ \theta_{\Pe \to \D^2}(z)=\Upsilon(z^{(1)}, z^{(3)})$ for $z=(z^{(1)}, z^{(2)}, z^{(3)}) \in \Pe$.
	
	\medskip 
	
	\noindent $(1) \implies (2)$. Suppose there exists $\Upsilon_{31} \in H^\infty(\D^2, \mathcal{B}(\LS_3, \LS_1))$ with $\|\Upsilon_{31}\|_{\infty, \D^2} \leq 1$ such that $\Upsilon_{12}(z)\Upsilon_{31}(z)=\Upsilon_{32}(z)$ for every $z \in \D^2$. Evidently, $\widehat{\Upsilon}_{31} \in H^\infty(\Pe, \mathcal{B}(\LS_3, \LS_1))$ such that $\widehat{\Upsilon}_{32}=\widehat{\Upsilon}_{12}\widehat{\Upsilon}_{31}$. It follows from Example 3.7 in \cite{Pal2026} that $(T_1, T_3)$ is a commuting pair of contractions for every $\underline{T}=(T_1, T_2, T_3) \in \mathfrak{M}_{\Pe}$. Let $\underline{T}=(T_1, T_2, T_3)\in \mathfrak{M}_{\Pe}$ be acting on a Hilbert space $\HS$. By above discussion, $\|\widehat{\Upsilon}_{31}(T_1, T_2, T_3)\|=\|\Upsilon_{31}(T_1, T_3)\| \leq \|\Upsilon_{31}\|_{\infty, \D^2} \leq 1$ and so, $\widehat{\Upsilon}_{31} \in SA_\Pe(\LS_3, \LS_1)$. Applying the part $(1) \implies (2)$ of Theorem \ref{thm_TC_P} to $(\widehat{\Upsilon}_{12}, \widehat{\Upsilon}_{32}, \widehat{\Upsilon}_{31})$ gives the desired conclusion. 
	
	\medskip 
	
	\noindent $(2) \implies (3)$. It follows by applying part $(2) \implies (3)$ of Theorem \ref{thm_TC_P} to $(\widehat{\Upsilon}_{12}, \widehat{\Upsilon}_{32}, \widehat{\Upsilon}_{31})$.
	
	\medskip 
	
	\noindent $(3) \implies (2)$. Suppose condition $(3)$ as in the statement of the theorem holds. Then there exist completely positive kernels $\xi, \nabla: \Pe \times \Pe \to \mathcal{B}(C(\DC), \mathcal{B}(\LS_2))$ and a $\mathcal{B}(\LS_2)$-valued weak kernel $\delta: \Pe \times \Pe \to \mathcal{B}(\LS_2)$ such that for all $z, w \in \Pe$,
	{\small 
		\begin{align*}
			\widehat{\Upsilon}_{12}(z)\widehat{\Upsilon}_{12}(w)^*-\widehat{\Upsilon}_{32}(z)\widehat{\Upsilon}_{32}(w)^*=\xi(z, w)(1-\mathrm{J}(z)\overline{\mathrm{J}(w)})+\nabla(z, w)(1-\mathrm{j}(z)\overline{\mathrm{j}(w)})+\left(1-\frac{z^{(2)}\overline{w}^{(2)}}{4}\right)\delta(z, w).
		\end{align*}
	}
	By the part $(3) \implies (1)$ of Theorem \ref{thm_TC_P}, there exists $\Upsilon_0 \in SA_{\Pe}(\LS_3, \LS_1)$ such that $\widehat{\Upsilon}_{12}(z)\Upsilon_{0}(z)=\widehat{\Upsilon}_{32}(z)$ for every $z \in \Pe$. Define $\Upsilon_{31}: \D^2 \to \mathcal{B}(\LS_3, \LS_1)$ as $\Upsilon_{31}(z^{(1)}, z^{(3)})=\Upsilon_0(z^{(1)}, 0, z^{(3)})$. It is not difficult to see that $\Upsilon_{31} \in H^\infty(\D^2, \mathcal{B}(\LS_3, \LS_1))$ with $\|\Upsilon_{31}\|_{\infty, \D^2} \leq 1$. The proof is now complete.
\end{proof}	

We now recover Toeplitz corona theorem on $\D^2$ from the pentablock framework.

\begin{thm}
	Let $\varphi_1, \dotsc, \varphi_n \in H^\infty(\D^2)$ and let $\epsilon>0$. Then the following are equivalent:
	\begin{enumerate}[leftmargin=*]
		\item[$(1)$] there exist $f_1, \dotsc, f_n \in H^\infty(\D^2)$ such that $
		\displaystyle \sup_{z\in \D^2} \left[|f_1(z)|^2+\dotsc+|f_n(z)|^2 \right]\leq \frac{1}{\epsilon}$ and $\overset{n}{\underset{i=1}{\sum}}\varphi_if_i=1$;
		
		\item[$(2)$] the map 
		$
		(z, w) \mapsto \left(\overset{n}{\underset{j=1}{\sum}} \varphi_j(z^{(1)}, z^{(3)})\overline{\varphi_j(w^{(1)}, w^{(3)})}-\epsilon^2\right)k(z, w)$ on $\Pe \times \Pe$ is positive semi-definite for every $k \in AK(\Pe)$;
		
		\item[$(3)$] there exist positive kernels $\xi, \nabla: \Pe \times \Pe \to C(\DC)^*$ and a weak kernel $\delta: \Pe \times \Pe \to \C$ such that
	{\small	\begin{align*}
			\overset{n}{\underset{j=1}{\sum}} \varphi_j(z^{(1)}, z^{(3)})\overline{\varphi_j(w^{(1)}, w^{(3)})}-\epsilon^2
			 =\xi(z, w)(1-\mathrm{J}(z)\overline{\mathrm{J}(w)})+\nabla(z, w)(1-\mathrm{j}(z)\overline{\mathrm{j}(w)})+\left(1-\frac{z^{(2)}\overline{w}^{(2)}}{4}\right)\delta(z, w)
		\end{align*}}
		for all $z, w \in \Pe$, where the maps $z\mapsto \mathrm{J}(z)$ and $z\mapsto \mathrm{j}(z)$ are as in \eqref{eqn_J(z)}.	 
	\end{enumerate}
\end{thm}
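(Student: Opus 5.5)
The plan is to specialize Theorem \ref{thm_TC_D2} exactly as Theorem \ref{thm_TC_P_II} was obtained from Theorem \ref{thm_TC_P}. We take $\LS_1=\C^n$, $\LS_2=\LS_3=\C$, $\Upsilon_{12}(z)=\begin{bmatrix}\varphi_1(z) & \dotsc & \varphi_n(z)\end{bmatrix}$ and $\Upsilon_{32}(z)=\epsilon$ for $z\in\D^2$. Then $\Upsilon_{12}(z)\Upsilon_{12}(w)^*-\Upsilon_{32}(z)\Upsilon_{32}(w)^*=\sum_j\varphi_j(z)\overline{\varphi_j(w)}-\epsilon^2$. With this choice, conditions (2) and (3) of Theorem \ref{thm_TC_D2} read off as (2) and (3) of the present statement, once scalar objects are identified. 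A $\mathcal{B}(\C)$-valued admissible kernel is an element of $AK(\Pe)$, $\otimes$ with scalars is ordinary multiplication, and $\mathcal{B}(C(\DC),\mathcal{B}(\C))=C(\DC)^*$ with completely positive kernels being exactly the positive kernels.

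So the work is to match condition (1). Suppose $f_1,\dotsc,f_n\in H^\infty(\D^2)$ satisfy $\sum\varphi_if_i=1$ and the norm bound. Put $\Upsilon_{31}=\epsilon\begin{bmatrix}f_1&\dotsc&f_n\end{bmatrix}^t$, so that $\Upsilon_{12}\Upsilon_{31}=\epsilon=\Upsilon_{32}$. We need $\|\Upsilon_{31}\|_{\infty,\D^2}\le1$, i.e.\ $\sup_z\sum|f_i(z)|^2\le 1/\epsilon^2$. Conversely, from $\Upsilon_{31}\in H^\infty(\D^2,\mathcal{B}(\C,\C^n))$ with norm at most $1$ and $\Upsilon_{12}\Upsilon_{31}=\epsilon$, we set $f=\epsilon^{-1}\Upsilon_{31}$. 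This gives $\sum\varphi_if_i=1$ and $\sup_z\sum_i|f_i(z)|^2\le\epsilon^{-2}$. The norm of a column vector in $\mathcal{B}(\C,\C^n)$ is its Euclidean norm, which is why the bound comes out as $\sum|f_i|^2$.

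The main obstacle is the normalization in (1). As written, $\sup\sum|f_i|^2\le 1/\epsilon$ does not match the $\epsilon^2$ in (2)--(3). The argument above naturally yields $\sup\sum|f_i|^2\le 1/\epsilon^2$, equivalently $\sup(\sum|f_i|^2)^{1/2}\le1/\epsilon$. I would therefore read the bound in (1) as $\|(f_1,\dotsc,f_n)^t\|_{\infty,\D^2}\le 1/\epsilon$, consistent with the Euclidean-norm convention in Theorem \ref{thm_TC_P_II}(1). I would note this explicitly in the proof.

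The remaining routine checks are as follows. Positivity of scalar kernels on finite sets is the same as the matrix condition in Theorem \ref{thm_TC_D2}(2) with $\LS_2=\C$. The functions $\varphi_j\circ\theta_{\Pe\to\D^2}$ are well defined because $\theta_{\Pe\to\D^2}$ maps $\Pe$ into $\D^2$ (Theorem \ref{thm_connect_P}). With these in hand, the equivalences follow directly from Theorem \ref{thm_TC_D2}.
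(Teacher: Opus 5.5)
Your proposal is correct and is exactly the paper's proof: it specializes Theorem \ref{thm_TC_D2} with $\LS_1=\C^n$, $\LS_2=\LS_3=\C$, $\Upsilon_{12}=[\varphi_1\ \cdots\ \varphi_n]$, $\Upsilon_{32}=\epsilon$ and $\Upsilon_{31}=\epsilon\,[f_1\ \cdots\ f_n]^t$. Your normalization remark is also right: this choice of $\Upsilon_{31}$ gives $\sup_{z\in\D^2}\sum_i|f_i(z)|^2\le 1/\epsilon^2$, which is the column-norm bound $1/\epsilon$ of Theorem \ref{thm_TC_P_II}(1), and the literal bound $1/\epsilon$ in (1) is a typo. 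For instance, the literal bound fails for $n=1$, $\varphi_1\equiv\epsilon<1$: there (2) holds trivially, but $f_1\equiv 1/\epsilon$ is forced and $|f_1|^2=1/\epsilon^2$.
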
		
	
\begin{proof}
The desired conclusion follows by applying Theorem \ref{thm_TC_D2} with $\LS_1=\C^n, \LS_2=\LS_3=\C$, and taking $\Upsilon_{32}(z)=\epsilon, \Upsilon_{12}(z)=\begin{bmatrix}
	\varphi_1(z) & \dotsc & \varphi_n(z)
\end{bmatrix}$ and $\Upsilon_{31}(z)=\epsilon \begin{bmatrix}
	f_1(z) & \dotsc & f_n(z)
\end{bmatrix}^t$ for $z \in \D^2$. 
\end{proof}	
	
\subsection{The symmetrized bidisc case.} 	Let $\LS_1, \LS_2$ and $\LS_3$ be complex separable Hilbert spaces. For functions $\Upsilon_{12}: \G_2 \to \mathcal{B}(\LS_1, \LS_2)$ and $\Upsilon_{32}: \G_2 \to \mathcal{B}(\LS_3, \LS_2)$, one asks if there exists
\begin{align}\label{eqn_TCG_2}
\text{$\Upsilon_{31} \in H^\infty(\G_2, \mathcal{B}(\LS_3, \LS_1))$ with $\|\Upsilon_{31}\|_{\infty, \G_2} \leq 1$ such that $\Upsilon_{12}(z)\Upsilon_{31}(z)=\Upsilon_{32}(z)$} 
\end{align} 
for every $z \in \G_2$. Here, $H^\infty(\G_2, \mathcal{B}(\LS, \LS'))$ is the space of all bounded $\mathcal{B}(\LS, \LS')$-valued holomorphic functions on $\G_2$. The authors of \cite{Tirtha_Sau} provided necessary and sufficient conditions for the existence of $\Upsilon_{31}$ as in \eqref{eqn_TCG_2}. As a consequence, they obtained Toeplitz corona theorem on $\G_2$. We present alternative characterizations to this problem through the pentablock framework. We recall from Theorem \ref{thm_connect_P} that $(z^{1)}, z^{(2)}) \in \G_2$ if and only if $(0, z^{(1)}, z^{(2)}) \in \Pe$. A holomorphic map $g: \G_2 \to \C$ induces a holomorphic map $g\circ \theta_{\Pe \to \G_2}$ on $\Pe$, where 
\[
\theta_{\Pe \to \G_2}: \Pe \to \G_2, (z^{(1)}, z^{(2)}, z^{(3)}) \mapsto (z^{(2)}, z^{(3)}).
\]
We briefly discuss operator theory on the symmetrized bidisc from \cite{AglerYoung, AglerYoung2003}. A commuting pair of operators $(S, P)$ is said to be a \textit{$\Gamma$-contraction} if $\|p(S, P)\| \leq \|p\|_{\infty, \Gamma}$ for all holomorphic polynomials $p$ in two variables. Here, $\Gamma$ denotes the closed symmetrized bidisc $\overline{\G}_2$. It was proved in \cite{AglerYoung} that $\Gamma$ is a complete spectral set for a $\Gamma$-contraction $(S, P)$, i.e., 
\begin{align}\label{eqn_309}
\left \|[f_{ij}(S, P)]_{i, j=1}^n\right\| \leq \sup\left\{\left\|[f_{ij}(z)]_{i, j=1}^n\right\| : z \in \G_2  \right\}
\end{align}
for all matricial functions $[f_{ij}]_{n \times n}$ with each $f_{ij}$ in the rational algebra $\text{Rat}(\Gamma)$ over $\Gamma$. It is not difficult to see that \eqref{eqn_309} also holds for operator-valued holomorphic maps on $\G_2$. We briefly sketch an argument here. Let $f \in H^\infty(\G_2, \mathcal{B}(\LS_1, \LS_2))$ and let $(S, P)$ be a $\Gamma$-contraction. For $0<r<1$ and $(z^{(1)}, z^{(2)}) \in \Gamma$, we have that $(rz^{(1)}, r^2z^{(2)})\in \G_2$ and so, the map $f_r: \Gamma \to \mathcal{B}(\LS_1, \LS_2)$ defined by $f_r(z^{(1)}, z^{(2)})=f(rz^{(1)}, r^2z^{(2)})$ is holomorphic. Let $\{e_1, \dotsc, e_m\} \subseteq \LS_1$ and $\{e_1', \dotsc, e_n'\} \subseteq \LS_2$ be orthonormal sets. Define $F_r: \Gamma \to M_{n \times m}(\C)$ as $\displaystyle F_r(z)=[\la f_r(z)e_i, e_j'\ra ]_{j, i}$. Let $z \in \Gamma$. Then 
{\small 
	\begin{align*}
	\la F_r(z)(h_1,\dotsc,h_m),(h'_1,\dotsc,h'_n)\ra 
	=	\sum_{i=1}^m\sum_{j=1}^n
	\la f_r(z)(h_i\otimes e_i),h'_j\otimes e'_j\ra 
	=
	\left\la
	f_r(z)
	\left(
	\sum_{i=1}^m h_i\otimes e_i
	\right),
	\sum_{j=1}^n h'_j\otimes e'_j
	\right\ra.
\end{align*}
}
Therefore,
$
	\left|
	\la F_r(z)(h_1,\dotsc,h_m),(h'_1,\dotsc,h'_n)\ra
	\right| 
	\leq  \|f_r\|_{\infty, \Gamma}
	\|(h_1,\dotsc,h_m)\|
	\,
	\|(h'_1,\dotsc,h'_n)\|
$
and $\|F_r\|_{\infty, \Gamma} \leq \|f_r\|_{\infty, \Gamma}$. Using the fact that $\Gamma$ is a complete spectral set for $(S, P)$, it follows that $\|F_r(S, P)\| \leq \|F_r\|_{\infty, \Gamma} \leq \|f_r\|_{\infty, \Gamma}$. Since the orthonormal sets $\{e_1, \dotsc, e_m\}$ and $\{e_1', \dotsc, e'_n\}$ are arbitrary, one can follow the arguments as in the converse part of Lemma \ref{lem_201} to show that $\|f_r(S, P)\| \leq \|f_r\|_{\infty, \Gamma} \leq \|f\|_{\infty, \G_2}$. Letting $r\to 1$, $\|f(S, P)\| \leq \|f\|_{\infty, \G_2}$. We are now in a position to provide an alternative proof of the following generalized version of Toeplitz corona theorem on $\G_2$ from \cite{Tirtha_Sau}. Our characterizations are written in terms of admissible kernels and completely positive kernels on $\Pe$.

\begin{thm}\label{thm_TC_G_2}
	Let $\LS_1, \LS_2$ and $\LS_3$ be Hilbert spaces. For functions $\Upsilon_{12}: \G_2 \to \mathcal{B}(\LS_1, \LS_2)$ and $\Upsilon_{32}: \G_2 \to \mathcal{B}(\LS_3, \LS_2)$, the following are equivalent: 
	\begin{enumerate}[leftmargin=*]
		\item[$(1)$] there exists $\Upsilon_{31} \in H^\infty(\G_2, \mathcal{B}(\LS_3, \LS_1))$ with $\|\Upsilon_{31}\|_{\infty, \G_2} \leq 1$ such that $\Upsilon_{12}(z)\Upsilon_{31}(z)=\Upsilon_{32}(z)$ for every $z \in \G_2$;

		\item[$(2)$] for every subset $F=\{z_1, \dotsc, z_n\} \subset \Pe$ and 		for every $\mathcal{B}(\LS_2)$-valued admissible kernel $k$ on $\Pe$,
				\[
		\begin{bmatrix}
			\left(\Upsilon_{12}\left(z_i^{(2)}, z_i^{(3)}\right)\Upsilon_{12}\left(z_j^{(2)}, z_j^{(3)}\right)^*-\Upsilon_{32}\left(z_i^{(2)}, z_i^{(3)}\right)\Upsilon_{32}\left(z_j^{(2)}, z_j^{(3)}\right)^*\right)\otimes k(z_i, z_j)
		\end{bmatrix}_{i, j=1}^n \geq 0;
		\]
		
		\item[$(3)$] there exist completely positive kernels $\xi, \nabla: \Pe \times \Pe \to \mathcal{B}(C(\DC), \mathcal{B}(\LS_2))$ and a weak kernel $\delta: \Pe \times \Pe \to \mathcal{B}(\LS_2)$ such that
		\begin{align*}
			&\Upsilon_{12}\left(z^{(2)}, z^{(3)}\right)\Upsilon_{12}\left(w^{(2)}, w^{(3)}\right)^*-\Upsilon_{32}\left(z^{(2)}, z^{(3)}\right)\Upsilon_{32}\left(w^{(2)}, w^{(3)}\right)^*\\
			&=\xi(z, w)(1-\mathrm{J}(z)\overline{\mathrm{J}(w)})+\nabla(z, w)(1-\mathrm{j}(z)\overline{\mathrm{j}(w)})+\left(1-\frac{z^{(2)}\overline{w}^{(2)}}{4}\right)\delta(z, w)
		\end{align*}
for all $z, w \in \Pe$, where the maps $z\mapsto \mathrm{J}(z)$ and $z\mapsto \mathrm{j}(z)$ are as in \eqref{eqn_J(z)}.	 		
	\end{enumerate}
\end{thm}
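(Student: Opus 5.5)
The plan is to mirror the proof of Theorem \ref{thm_TC_D2}, replacing the projection $\theta_{\Pe \to \D^2}$ by $\theta_{\Pe \to \G_2}$. For a map $\Upsilon:\G_2 \to \mathcal{B}(\LS, \LS')$ we set $\widehat{\Upsilon}=\Upsilon\circ\theta_{\Pe\to\G_2}$, that is, $\widehat{\Upsilon}(z)=\Upsilon(z^{(2)}, z^{(3)})$ for $z \in \Pe$. This is well defined because $(z^{(2)},z^{(3)})\in\G_2$ whenever $z\in\Pe$, by the characterization of $\Pe$ recalled in Section \ref{sec_02}. We then prove $(1)\Rightarrow(2)\Rightarrow(3)\Rightarrow(1)$ by transporting each implication to Theorem \ref{thm_TC_P}, applied to the triple $(\widehat{\Upsilon}_{12},\widehat{\Upsilon}_{32},\widehat{\Upsilon}_{31})$.

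For $(1)\Rightarrow(2)$, take $\Upsilon_{31}$ as in $(1)$. Then $\widehat{\Upsilon}_{31}$ is holomorphic on $\Pe$ and satisfies $\widehat{\Upsilon}_{12}\widehat{\Upsilon}_{31}=\widehat{\Upsilon}_{32}$. The key point is to show $\widehat{\Upsilon}_{31}\in SA_\Pe(\LS_3,\LS_1)$. Let $\underline{T}=(T_1,T_2,T_3)\in\mathfrak{M}_\Pe$. By definition $(T_2,T_3)\in\mathfrak{M}_{\G_2}$, so $\|T_2\|<2$ and $\|\Phi_\al(T_2,T_3)\|<1$ for all $\al\in\DC$. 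By the operator version of the Agler--Young characterization (\cite{AglerYoung}), this makes $(T_2,T_3)$ a $\Gamma$-contraction; in fact its Taylor spectrum lies in $\G_2$. Since $\widehat{\Upsilon}_{31}$ depends only on the last two coordinates, we have $\widehat{\Upsilon}_{31}(\underline{T})=\Upsilon_{31}(T_2,T_3)$. This identity holds by uniqueness of the analytic functional calculus, checked on elementary tensors $f\otimes A$ using the description from \cite{Esch}. The operator-valued extension of \eqref{eqn_309} discussed just before the statement then gives $\|\Upsilon_{31}(T_2,T_3)\|\le\|\Upsilon_{31}\|_{\infty,\G_2}\le 1$. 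Part $(1)\Rightarrow(2)$ of Theorem \ref{thm_TC_P} now yields exactly the matrix positivity in $(2)$.

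The implication $(2)\Rightarrow(3)$ is immediate from part $(2)\Rightarrow(3)$ of Theorem \ref{thm_TC_P}, because the kernel in $(2)$ is precisely $\widehat{\Upsilon}_{12}(z)\widehat{\Upsilon}_{12}(w)^*-\widehat{\Upsilon}_{32}(z)\widehat{\Upsilon}_{32}(w)^*$. For $(3)\Rightarrow(1)$, part $(3)\Rightarrow(1)$ of Theorem \ref{thm_TC_P} produces $\Upsilon_0\in SA_\Pe(\LS_3,\LS_1)$ with $\widehat{\Upsilon}_{12}\Upsilon_0=\widehat{\Upsilon}_{32}$ on $\Pe$. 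We then define $\Upsilon_{31}(z^{(2)},z^{(3)})=\Upsilon_0(0,z^{(2)},z^{(3)})$, which makes sense by Theorem \ref{thm_connect_P}. It is holomorphic on $\G_2$, and evaluating the identity at the points $(0,z^{(2)},z^{(3)})$ gives $\Upsilon_{12}\Upsilon_{31}=\Upsilon_{32}$ on $\G_2$.

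It remains to check $\|\Upsilon_{31}\|_{\infty,\G_2}\le1$. For a scalar point $z=(0,z^{(2)},z^{(3)})\in\Pe$, the triple of scalar operators $z\cdot I$ on $\C$ lies in $\mathfrak{M}_\Pe$, since the defining inequalities are exactly \eqref{eqn_P}. Hence $\|\Upsilon_0(z)\|=\|\Upsilon_0(z\cdot I_\C)\|\le1$. I expect the main (though mild) obstacle to be the step $(1)\Rightarrow(2)$, namely justifying that every $(T_2,T_3)$ arising from $\mathfrak{M}_\Pe$ is a $\Gamma$-contraction, so that the operator-valued von Neumann type inequality on $\Gamma$ applies, and that the functional calculus of $\widehat{\Upsilon}_{31}$ reduces to that of $\Upsilon_{31}$. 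If a direct citation is unavailable, I would argue via Agler--Young's theorem that $\|T_2\|\le2$ together with $\|\Phi_\al(T_2,T_3)\|\le1$ for all $\al\in\DC$ characterizes $\Gamma$-contractions.
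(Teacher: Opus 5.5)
Your proposal is correct and follows essentially the same route as the paper. You compose with $\theta_{\Pe\to\G_2}$, show that $(T_2,T_3)$ is a $\Gamma$-contraction with spectrum in $\G_2$ so that the operator-valued von Neumann inequality on $\Gamma$ gives $(1)\Rightarrow(2)$ via Theorem \ref{thm_TC_P}, and restrict $\Upsilon_0$ to the slice $\{0\}\times\G_2$ for $(3)\Rightarrow(1)$. Your check that the scalar triples $z\cdot I_{\C}$ lie in $\mathfrak{M}_\Pe$, giving $\|\Upsilon_{31}\|_{\infty,\G_2}\le 1$, fills in a step the paper only asserts.
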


\begin{proof}
	We prove $(1) \implies (2) \implies (3) \implies (1)$. Given a map $\Upsilon:\G_2 \to \mathcal{B}(\LS, \LS')$, we denote by $\widetilde{\Upsilon}: \Pe \to \mathcal{B}(\LS, \LS')$ the map $\widetilde{\Upsilon}(z)=\Upsilon\circ \theta_{\Pe \to \G_2}(z)$ for $z \in \G_2$.

\medskip 

\noindent $(1) \implies (2)$. Suppose there exists $\Upsilon_{31} \in H^\infty(\G_2, \mathcal{B}(\LS_3, \LS_1))$ with $\|\Upsilon_{31}\|_{\infty, \G_2} \leq 1$ such that $\Upsilon_{12}(z)\Upsilon_{31}(z)=\Upsilon_{32}(z)$ for every $z \in \G_2$. It is easy to see that $\widetilde{\Upsilon}_{31} \in H^\infty(\Pe, \mathcal{B}(\LS_3, \LS_1))$ and $\widetilde{\Upsilon}_{32}=\widetilde{\Upsilon}_{12}\widetilde{\Upsilon}_{31}$. We now show that $\widetilde{\Upsilon}_{31} \in SA_\Pe(\LS_3, \LS_1)$. Let $\underline{T}=(T_1, T_2, T_3) \in \mathfrak{M}_{\Pe}$ be acting on a Hilbert space $\HS$. Then $\|T_2\| < 2$ and $\|\Phi_\al(T_2, T_3)\| < 1$ for all $\alpha \in \DC$. It follows from Theorem 1.5 in \cite{AglerYoung2003} that $(T_2, T_3)$ is a $\Gamma$-contraction. By above discussion, $\|\widetilde{\Upsilon}_{31}(T_1, T_2, T_3)\|=\|\Upsilon_{31}(T_2, T_3)\| \leq \|\Upsilon_{31}\|_{\infty, \G_2} \leq 1$ and so, $\widetilde{\Upsilon}_{31} \in SA_\Pe(\LS_3, \LS_1)$. Applying the part $(1) \implies (2)$ of Theorem \ref{thm_TC_P} to the triple $(\widetilde{\Upsilon}_{12}, \widetilde{\Upsilon}_{32}, \widetilde{\Upsilon}_{31})$ of vector-valued functions on $\Pe$ gives the desired conclusion. 

\medskip 

\noindent $(2) \implies (3)$. It follows by using part $(2) \implies (3)$ of Theorem \ref{thm_TC_P} with $(\widetilde{\Upsilon}_{12}, \widetilde{\Upsilon}_{32}, \widetilde{\Upsilon}_{31})$.

\medskip 

\noindent $(3) \implies (2)$. By given hypothesis, we have a weak kernel $\delta: \Pe \times \Pe \to \mathcal{B}(\LS_2)$ and completely positive kernels $\xi, \nabla: \Pe \times \Pe \to \mathcal{B}(C(\DC), \mathcal{B}(\LS_2))$ such that for all $z, w \in \Pe$,
{\small 
	\begin{align*}
	\widetilde{\Upsilon}_{12}(z)\widetilde{\Upsilon}_{12}(w)^*-\widetilde{\Upsilon}_{32}(z)\widetilde{\Upsilon}_{32}(w)^*=\xi(z, w)(1-\mathrm{J}(z)\overline{\mathrm{J}(w)})+\nabla(z, w)(1-\mathrm{j}(z)\overline{\mathrm{j}(w)})+\left(1-\frac{z^{(2)}\overline{w}^{(2)}}{4}\right)\delta(z, w).
\end{align*}
}
We have by Theorem \ref{thm_TC_P} that there exists $\Upsilon_0 \in SA_{\Pe}(\LS_3, \LS_1)$ such that $\widetilde{\Upsilon}_{12}\Upsilon_{0}=\widetilde{\Upsilon}_{32}$. Define $\Upsilon_{31}: \G_2 \to \mathcal{B}(\LS_3, \LS_1)$ as $\Upsilon_{31}(z^{(2)}, z^{(3)})=\Upsilon_0(0, z^{(2)}, z^{(3)})$ and the desired conclusion follows. 
\end{proof}	

We now present Toeplitz corona theorem on $\G_2$ in terms of kernels on $\Pe$.

\begin{thm}
	Let $\varphi_1, \dotsc, \varphi_n \in H^\infty(\G_2)$ and let $\epsilon>0$. Then the following are equivalent:
	\begin{enumerate}[leftmargin=*]
		\item[$(1)$] there exist $f_1, \dotsc, f_n \in H^\infty(\G_2)$ such that $
\displaystyle \sup_{z\in \G_2} \left[|f_1(z)|^2+\dotsc+|f_n(z)|^2 \right]\leq \frac{1}{\epsilon}$ and $\overset{n}{\underset{i=1}{\sum}}\varphi_if_i=1$;
		
		\item[$(2)$] the map 
		$
		(z, w) \mapsto \left(\overset{n}{\underset{j=1}{\sum}} \varphi_j(z^{(2)}, z^{(3)})\overline{\varphi_j(w^{(2)}, w^{(3)})}-\epsilon^2\right)k(z, w)$ on $\Pe \times \Pe
		$ 
		is positive semi-definite for every $k \in AK(\Pe)$;
		
		\item[$(3)$] there exist positive kernels $\xi, \nabla: \Pe \times \Pe \to C(\DC)^*$ and a weak kernel $\delta: \Pe \times \Pe \to \C$ such that
	{\small	\begin{align*}
			\overset{n}{\underset{j=1}{\sum}} \varphi_j(z^{(2)}, z^{(3)})\overline{\varphi_j(w^{(2)}, w^{(3)})}-\epsilon^2
			 =\xi(z, w)(1-\mathrm{J}(z)\overline{\mathrm{J}(w)})+\nabla(z, w)(1-\mathrm{j}(z)\overline{\mathrm{j}(w)})+\left(1-\frac{z^{(2)}\overline{w}^{(2)}}{4}\right)\delta(z, w)
		\end{align*}}
		for all $z, w \in \Pe$, where the maps $z\mapsto \mathrm{J}(z)$ and $z\mapsto \mathrm{j}(z)$ are as in \eqref{eqn_J(z)}.	 
	\end{enumerate}
\end{thm}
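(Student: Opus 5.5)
The plan is to obtain this as a direct specialization of Theorem \ref{thm_TC_G_2}, in the same way the bidisc version was deduced from Theorem \ref{thm_TC_D2}. Take $\LS_1=\C^n$ and $\LS_2=\LS_3=\C$. Define the row $\Upsilon_{12}(z)=\begin{bmatrix}\varphi_1(z) & \dotsc & \varphi_n(z)\end{bmatrix}$ and the constant $\Upsilon_{32}(z)=\epsilon$ for $z\in\G_2$. In statement (1), the candidate solution is $\Upsilon_{31}(z)=\epsilon\begin{bmatrix} f_1(z) & \dotsc & f_n(z)\end{bmatrix}^t$.

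With these choices the identification of the three conditions is as follows.

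\begin{itemize}
\item[(a)] The identity $\Upsilon_{12}\Upsilon_{31}=\Upsilon_{32}$ reads $\epsilon\sum_i\varphi_if_i=\epsilon$, which is equivalent to $\sum_i\varphi_if_i=1$.
\item[(b)] The norm condition $\|\Upsilon_{31}\|_{\infty,\G_2}\le 1$ means $\epsilon^2\sup_{z}\sum_i|f_i(z)|^2\le 1$. This is a bound on the sum of squares of the $f_i$ by a power of $1/\epsilon$, so (1) of the present theorem corresponds to (1) of Theorem \ref{thm_TC_G_2}. As in the analogous bidisc corollary, the stated bound $1/\epsilon$ should be read as the normalization matching $\|\Upsilon_{31}\|\le 1$, namely $1/\epsilon^2$ for the squared sum. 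I would state this reading explicitly rather than silently.
\item[(c)] Since $\LS_2=\C$, a $\mathcal{B}(\LS_2)$-valued admissible kernel is exactly an element of $AK(\Pe)$. The Kronecker product with a scalar is ordinary multiplication, so condition (2) of Theorem \ref{thm_TC_G_2} becomes positivity of $\big(\sum_j\varphi_j(z^{(2)},z^{(3)})\overline{\varphi_j(w^{(2)},w^{(3)})}-\epsilon^2\big)k(z,w)$. Here one uses $\Upsilon_{12}(z)\Upsilon_{12}(w)^*=\sum_j\varphi_j(z)\overline{\varphi_j(w)}$ and $\Upsilon_{32}\Upsilon_{32}^*=\epsilon^2$.
\item[(d)] Completely positive kernels into $\mathcal{B}(C(\DC),\mathcal{B}(\C))=C(\DC)^*$ are precisely the positive kernels, and $\mathcal{B}(\C)$-valued weak kernels are scalar weak kernels. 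Hence (3) of Theorem \ref{thm_TC_G_2} matches (3) here verbatim.
\end{itemize}

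The only step needing a moment's care is the converse direction of (1). Theorem \ref{thm_TC_G_2} yields some $\Upsilon_{31}\in H^\infty(\G_2,\mathcal{B}(\C,\C^n))$ with norm at most $1$ and $\Upsilon_{12}\Upsilon_{31}=\epsilon$. Write its components as $\epsilon f_i$, that is, set $f_i=\epsilon^{-1}(\Upsilon_{31})_i$. Each $f_i$ is then bounded holomorphic on $\G_2$, satisfies $\sum_i\varphi_if_i=1$, and obeys the required sup bound. Apart from this bookkeeping the argument is purely formal, so I expect no real obstacle beyond the normalization remark in (b).
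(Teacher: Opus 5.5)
Your proposal is correct and follows the paper's proof exactly. The paper specializes Theorem \ref{thm_TC_G_2} with the same choices: $\LS_1=\C^n$, $\LS_2=\LS_3=\C$, $\Upsilon_{12}=\begin{bmatrix}\varphi_1 & \dotsc & \varphi_n\end{bmatrix}$, $\Upsilon_{32}=\epsilon$ and $\Upsilon_{31}=\epsilon\begin{bmatrix} f_1 & \dotsc & f_n\end{bmatrix}^t$.

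Your normalization remark is also right and worth keeping. The condition $\|\Upsilon_{31}\|_{\infty,\G_2}\le 1$ is exactly $\sup_{z}\sum_i|f_i(z)|^2\le 1/\epsilon^2$, and that is what the paper's argument actually proves. The literal bound $1/\epsilon$ in the statement is false in general: for $n=1$ and $\varphi_1\equiv\epsilon<1$, conditions (2) and (3) hold trivially, while (1) forces $f_1\equiv 1/\epsilon$, so $|f_1|^2=1/\epsilon^2>1/\epsilon$.
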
	

\begin{proof}
	The desired conclusion follows by applying Theorem \ref{thm_TC_G_2} with $\LS_1=\C^n, \LS_2=\LS_3=\C$, and taking $\Upsilon_{32}(z)=\epsilon, \Upsilon_{12}(z)=\begin{bmatrix}
		\varphi_1(z) & \dotsc & \varphi_n(z)
	\end{bmatrix}$ and $\Upsilon_{31}(z)=\epsilon \begin{bmatrix}
		f_1(z) & \dotsc & f_n(z)
	\end{bmatrix}^t$ for $z \in \G_2$. 
\end{proof}

\bigskip
	
\noindent \textbf{Funding.} The first named author is supported in part by the “Core Research Grant” with Award No. CRG/2023/005223 from Anusandhan National Research Foundation (ANRF) of Govt. of India. The second named author is supported via the IIT Bombay RDF Grant of the first named author with Project Code RI/0115-10001427.

\vspace{0.2cm} 

\section{Data Availability Statement}

\noindent (1) Data sharing is not applicable to this article as no datasets were generated or analysed during the current study.\\

\noindent (2) In case any datasets are generated during and/or analysed during the current study, they must be available from the corresponding author on reasonable request.

\vspace{0.2cm}

\section{Declarations}

\vspace{0.2cm}

\noindent \textbf{Ethical Approval.} This declaration is not applicable.\\

\noindent \textbf{Competing interests.} There are no competing interests.\\

\noindent \textbf{Authors' contributions.} This declaration is not applicable.

\end{document}